\newtheorem{thm}{Theorem}[section]
\newtheorem{lemma}[thm]{Lemma}
\newtheorem{prop}[thm]{Proposition}
\theoremstyle{definition}
\newtheorem{defn}[thm]{Definition}
\newtheorem{say}[thm]{}
\newtheorem{defn-thm}[thm]{Definition-Theorem}  
\newtheorem{claim}[thm]{Claim} 
\newtheorem{rem}[thm]{Remark} 
\newtheorem{rems}[thm]{Remarks}
\theoremstyle{remark}
\renewcommand{\o}[0]{{\mathcal O}}  
\newcommand{\z}[0]{{\mathbb Z}}
\renewcommand{\r}[0]{{\mathbb R}}
\newcommand{\ch}{\text {ch}}
\newcommand{\td}{\text {td}}
\newcommand{\cC}{{\mathcal {C}}}
\newcommand{\cS}{{\mathcal {S}}}
\newcommand{\cN}{{\mathcal {N}}}
\newcommand{\p}[0]{{\mathbb P}}
\newcommand{\q}[0]{{\mathbb Q}}
\newcommand{\ev}[0]{\operatorname{ev}}
\newcommand{\id}[0]{\operatorname{id}}
\newcommand{\rat}[0]{\operatorname{RatCurves}^n}
\newcommand{\pic}[0]{\operatorname{Pic}}
\newcommand{\rank}[0]{\operatorname{rank}}
\newcommand{\Hom}[0]{\operatorname{Hom}}
\newcommand{\aut}[0]{\operatorname{Aut}}
\newcommand{\chow}[0]{\operatorname{Chow}}
\newcommand{\nec}[1]{\overline{NE}_1({#1})}
\newcommand{\eff}[0]{\overline{NE}}
\newcommand{\sym}[0]{\operatorname{Sym}}
\def\into{\DOTSB\lhook\joinrel\rightarrow}
\numberwithin{equation}{section}
\title[Rational curves on higher Fano manifolds]
{Polarized minimal families of rational curves \\ and higher Fano manifolds}
\date{}
\author{Carolina Araujo}
\author{Ana-Maria Castravet} 
\address{Carolina Araujo: \sf IMPA, Estrada Dona Castorina 110, Rio de
  Janeiro, RJ 22460-320, Brazil} 
\email{caraujo@impa.br}
\address{\vskip -.5cm Ana-Maria Castravet: \sf Department of Mathematics, 
University of Arizona, 617 N Santa Rita Ave, Tucson, AZ 85721-0089, USA} 
\email{noni@math.arizona.edu}
\begin{document}

\maketitle

\def\thefootnote{}
\footnotetext{2000 {\it{Mathematics Subject Classification}} Primary 14J45, Secondary 14M20.}

\begin{abstract}
In this paper we investigate  Fano manifolds $X$
whose Chern characters $\ch_k(X)$ satisfy some positivity conditions.
Our approach is via the study of \emph{polarized minimal families of rational curves} $(H_x,L_x)$
through a general point $x\in X$. 
First we translate positivity properties of the Chern characters of $X$
into properties of the pair $(H_x,L_x)$.
This allows us to classify polarized minimal families of rational curves
associated to Fano manifolds $X$ satisfying $\ch_2(X)\geq0$ and $\ch_3(X)\geq0$.
As a first application, we provide sufficient conditions for these manifolds
to be covered by subvarieties isomorphic to $\p^2$ and $\p^3$. 
Moreover, this classification enables us to find 
new examples of Fano manifolds satisfying  $\ch_2(X)\geq0$.
\end{abstract}


\section{Introduction}\label{introduction}

Fano manifolds, i.e., smooth complex projective varieties with ample anticanonical class, 
play an important role in the classification of complex projective varieties.
In \cite{mori79}, Mori showed that Fano manifolds are uniruled, i.e.,
they contain rational curves through every point. 
Then he studied minimal dominating families of rational curves on Fano manifolds, and used them to 
characterize projective spaces as the only smooth projective varieties having ample tangent bundle. Since then, 
minimal dominating families of rational curves have been extensively investigated and proved to be a useful tool in the
study of uniruled varieties.

Let $X$ be a smooth complex projective uniruled variety, and $x\in X$ a general point. 
A \emph{minimal family of rational curves through $x$} is a \emph{smooth} and \emph{proper} 
irreducible component $H_x$
of the scheme $\rat(X,x)$ parametrizing rational curves on $X$ passing through $x$. 
There always exists such a family.
For instance, one can take $H_x$ to be an irreducible component of $\rat(X,x)$ parametrizing 
rational curves through $x$ having minimal degree with respect to some fixed
ample line bundle on $X$.
While we view $X$ as an abstract variety, 
$H_x$ comes with a natural polarization $L_x$, which can be defined as follows
(see Section~\ref{section:rat_curves} for details).
By \cite{kebekus}, there is a \emph{finite morphism} $\tau_x:  \ H_x  \to \p(T_xX^{^{\vee}})$ 
that sends a point parametrizing a curve smooth at $x$ to its tangent  direction at $x$. 
We set $L_x=\tau_x^*\o(1)$. 
We call the pair $(H_x, L_x)$ a \emph{polarized minimal family of rational curves through $x$}.
The image $\cC_x$ of $\tau_x$ is called the \emph{variety of minimal rational tangents} at $x$.
The natural projective embedding $\cC_x\subset \p(T_xX^{^{\vee}})$ has been 
successfully explored to investigate the geometry of Fano manifolds.
See \cite{hwang} and \cite{hwang_ICM} for an overview of applications of the
variety of minimal rational tangents.

In this paper we view $(H_x, L_x)$ as a \emph{smooth polarized variety}.
We start by giving a formula for all the Chern characters of the variety $H_x$ in terms of the 
Chern characters of $X$ and $c_1(L_x)$.
This illustrates the general principle that  the pair $(H_x, L_x)$ encodes many 
properties of the ambient variety $X$. 
In what follows $\pi_x:U_x\to H_x$ and $\ev_x:U_x\to X$
denote the universal family morphisms introduced in section~\ref{section:rat_curves}, and the $B_j$'s 
denote the Bernoulli numbers, defined by the formula
$\frac{x}{e^x-1}=\sum_{j=0}^{\infty} \frac{B_j}{j!} x^j$.

\begin{prop} \label{chern_characters}
Let $X$ be a smooth complex projective uniruled variety.
Let $(H_x, L_x)$ be a polarized  minimal family of rational curves through a general point $x\in X$.
For any $k\geq1$, the $k$-th Chern character of $H_x$ is given by the formula:
\begin{equation}\label{ch_k for H_x}
ch_k(H_x)=\sum_{j=0}^{k}\frac{(-1)^jB_j}{j!}c_1(L_x)^j{\pi_x}_*\ev_x^*\big(\ch_{k+1-j}(X)\big)-\frac{1}{k!}c_1(L_x)^k.
\end{equation}
When $k$ is $1$ or $2$ this becomes:
\begin{equation}\label{c_1 of H_x}
c_1(H_x)={\pi_x}_*\ev_x^*\big(\ch_2(X)\big)+\frac{d}{2}c_1(L_x), \ \ and
\end{equation}
\begin{equation}\label{ch_2 of H_x}
\ch_2(H_x)={\pi_x}_*\ev_x^*\big(\ch_3(X)\big)+\frac{1}{2}{\pi_x}_*\ev_x^*\big(\ch_2(X)\big)\cdot c_1(L_x)+\frac{d-4}{12}c_1(L_x)^2.
\end{equation}
\end{prop}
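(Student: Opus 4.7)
The plan is to view $\pi_x\colon U_x\to H_x$ as a $\p^1$-bundle with canonical section $\sigma_x\colon H_x\to U_x$ determined by $x$ (so that $\ev_x\circ\sigma_x$ is the constant map with value $x$), and to compute $\ch(T_{H_x})$ via Grothendieck--Riemann--Roch for $\pi_x$. First I pin down the geometry: the differential of $\ev_x$ along $\sigma_x$ identifies $\sigma_x^*T_{\pi_x}$ with the pullback of the tautological subbundle of $T_xX\otimes\o_{\p(T_xX^\vee)}$ via $\tau_x$, so $\sigma_x^*T_{\pi_x}\cong L_x^{-1}$ and $\sigma_x^*[\sigma_x]=c_1(\sigma_x^*T_{\pi_x})=-c_1(L_x)$. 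Setting $\xi=c_1(T_{\pi_x})$, the difference $\xi-2[\sigma_x]$ integrates to $0$ on each fiber $\p^1$, hence is a pullback from $H_x$; restriction to $\sigma_x$ pins down this pullback and yields
\[
\xi=2[\sigma_x]+\pi_x^*c_1(L_x).
\]
In particular $U_x\cong\p(\o_{H_x}\oplus L_x)$ with $\o_{U_x}(\sigma_x)\cong\o_{U_x}(1)\otimes\pi_x^*L_x^{-1}$, and $T_{\pi_x}(-\sigma_x)\cong\o_{U_x}(1)$.

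Next I realize $T_{H_x}$ as a pushforward. Choosing $x$ general so that the minimal rational curves are \emph{standard}, the sequence $0\to T_{\pi_x}\to\ev_x^*T_X\to\mathcal{Q}\to 0$ on $U_x$ twisted by $-\sigma_x$ has all relevant $R^1\pi_{x*}$-terms vanishing (on each fiber its summands are $\o(1), \o, \o(-1)$), and deformation theory identifies $T_{H_x}\cong\pi_{x*}\mathcal{Q}(-\sigma_x)$. In $K$-theory this gives
\[
[T_{H_x}]=[R\pi_{x*}\ev_x^*T_X(-\sigma_x)]-[R\pi_{x*}T_{\pi_x}(-\sigma_x)],
\]
and GRR applied to each term produces
\[
\ch(T_{H_x})=\pi_{x*}\Bigl(\bigl(\ev_x^*\ch(T_X)-e^{\xi}\bigr)\cdot e^{-[\sigma_x]}\cdot\td(T_{\pi_x})\Bigr).
\]

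The third step is to unwind this pushforward into the stated closed form. For the $e^\xi$ piece, $T_{\pi_x}(-\sigma_x)\cong\o_{U_x}(1)$ gives $\pi_{x*}T_{\pi_x}(-\sigma_x)=\o_{H_x}\oplus L_x$ with Chern character $1+e^{c_1(L_x)}$; its degree-$2k$ part contributes $-\tfrac{1}{k!}c_1(L_x)^k$ to $\ch_k(T_{H_x})$ for $k\geq 1$. For the $\ev_x^*\ch(T_X)$ piece, the $\ch_0(X)=n$ contribution is $n\cdot\ch(R\pi_{x*}\o(-\sigma_x))=0$ (as $\o(-\sigma_x)$ restricts to $\o_{\p^1}(-1)$). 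For $\ch_m(X)$ with $m\geq 1$ the crucial input is $\sigma_x^*\ev_x^*\ch_m(X)=0$: combined with $\pi_{x*}(\alpha\cdot[\sigma_x]^s)=\sigma_x^*\alpha\cdot(-c_1(L_x))^{s-1}$ for $s\geq 1$, it forces every $[\sigma_x]^l$ ($l\geq 1$) in $e^{-[\sigma_x]}$ and every $[\sigma_x]^s$ ($s\geq 1$) in the binomial expansion of $\xi^j=(2[\sigma_x]+\pi_x^*c_1(L_x))^j$ to push forward to zero, leaving
\[
\pi_{x*}\bigl(\ev_x^*\ch_m(X)\cdot\xi^j\bigr)=\pi_{x*}\ev_x^*\ch_m(X)\cdot c_1(L_x)^j.
\]
Expanding $\td(T_{\pi_x})=\sum_j\tfrac{(-1)^jB_j}{j!}\xi^j$ and collecting degree-$2k$ terms yields exactly $\sum_{j=0}^k\tfrac{(-1)^jB_j}{j!}c_1(L_x)^j\,\pi_{x*}\ev_x^*\ch_{k+1-j}(X)$. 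Summing the two contributions gives the formula, and the displayed cases $k=1,2$ follow by substitution using $\pi_{x*}\ev_x^*\ch_1(X)=-K_X\cdot C=d+2$. The main obstacle is the deformation-theoretic identification $T_{H_x}\cong\pi_{x*}\mathcal{Q}(-\sigma_x)$ with the accompanying $R^1$-vanishings, which rely on standardness of the generic minimal curve; the subsequent combinatorial collapse is driven entirely by the two identities $\xi=2[\sigma_x]+\pi_x^*c_1(L_x)$ and $\sigma_x^*\ev_x^*\ch_m(X)=0$ for $m\geq 1$.
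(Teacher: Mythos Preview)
Your argument is correct and follows the same route as the paper: Druel's identification $T_{H_x}\cong\pi_{x*}\big((\ev_x^*T_X/T_{\pi_x})(-\sigma_x)\big)$, Grothendieck--Riemann--Roch along $\pi_x$, and the two identities $c_1(T_{\pi_x})=2[\sigma_x]+\pi_x^*c_1(L_x)$ and $[\sigma_x]\cdot\ev_x^*\gamma=0$ for $\gamma\in A^{\geq 1}(X)$. The only difference is organizational: by interpreting the $e^{\xi}$ contribution as GRR for $T_{\pi_x}(-\sigma_x)$ (whose pushforward has Chern character $1+e^{c_1(L_x)}$) and the $\ch_0(X)=n$ contribution as GRR for $\o(-\sigma_x)$ (which vanishes since $\o(-\sigma_x)$ restricts to $\o_{\p^1}(-1)$), you sidestep the explicit Bernoulli identity $\sum_{j=1}^{k+1}\tfrac{A_{k+1-j}}{j!}=\tfrac{1}{k!}$ that the paper invokes to collapse its $Z_k$ and $W_{k+1}$ computation.

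Two minor imprecisions, neither fatal. First, writing $U_x\cong\p(\o_{H_x}\oplus L_x)$ presumes that the extension $0\to\o_{H_x}\to E_x\to L_x^{-1}\to 0$ splits; the paper does not assert this, and you do not actually need it, since only the Chern character of $\pi_{x*}T_{\pi_x}(-\sigma_x)\cong E_x\otimes L_x$ enters. Second, the parenthetical ``on each fiber its summands are $\o(1),\o,\o(-1)$'' holds only over the \emph{standard} locus, and not every curve in $H_x$ is standard even for general $x$; the $R^1$-vanishing you need follows already from freeness of every $f$ (so that $f^*T_X(-1)$ has all summands of degree $\geq -1$), exactly as the paper argues.
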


Formulas for  the first Chern class $c_1(H_x)$ were previously obtained in  \cite[Proposition 4.2]{druel_chern_classes} and 
\cite[Theorem 1.1]{dJ-S:Chern_classes}. However, $c_1(L_x)$ appears disguised in those formulas.

\medskip

Next we turn our attention to Fano manifolds $X$ whose Chern characters satisfy some positivity 
conditions. 
In order to state our main theorem, we introduce some notation. 
See section~\ref{section:rat_curves} for details. 
Given a positive integer $k$, we denote by $N_k(X)_{\r}$ the 
real vector space of $k$-cycles on $X$ modulo numerical equivalence. 
We denote by $\eff_k(X)\subset N_k(X)_{\r}$ the closure of the cone generated by effective $k$-cycles.
There is a linear map ${\ev_x}_*\pi_x^*:  N_k(H_x)_{\r}  \to N_{k+1}(X)_{\r}$.
A codimension $k$ cycle $\alpha\in A^k(X)\otimes_\z\q$ is 
\emph{weakly positive} (respectively \emph{nef}) if $\alpha\cdot \beta>0$ (respectively $\alpha\cdot \beta \geq 0$)
for every effective integral $k$-cycle $\beta\neq 0$.
In this case we write $\alpha>0$ (respectively $\alpha\geq0$).

One easily checks  that the only del Pezzo surface satisfying $\ch_2>0$ is $\p^2$.
In \cite{2Fano_3folds}, we go through the classification of Fano threefolds, and
check that the only ones satisfying $\ch_2>0$ are $\p^3$ and the smooth quadric hypersurface in $\p^4$.
In higher dimensions, Proposition~\ref{chern_characters} above
allows us to translate positivity properties of the Chern characters of $X$ into those of $H_x$, and 
classify polarized varieties $(H_x, L_x)$ associated to 
Fano manifolds $X$ with $\ch_2(X)\geq0$ and $\ch_3(X)\geq0$.
The following is our main theorem.

\begin{thm}\label{thm1} \label{thm2}
Let $X$ be a  Fano manifold. 
Let $(H_x, L_x)$ be a polarized minimal family of rational curves through a general point $x\in X$.
Set $d=\dim H_x$.
\begin{enumerate}
\item If $\ch_2(X)>0$ (respectively $\ch_2(X)\geq0$), then $-2K_{H_x}-dL_x$ is ample (respectively nef). 
	This necessary condition is also sufficient provided that ${\ev_x}_*\pi_x^*\big(\eff_1(H_x)\big)=\eff_2(X)$. 
\item  If $\ch_2(X)>0$, then $H_x$ is a Fano manifold with $\rho(H_x)=1$ 
	except when $(H_x, L_x)$ is isomorphic to one of the following:
\begin{enumerate} 
	\item $\Big(\p^m\times \p^m, p_{_1}^*\o(1)\otimes p_{_2}^*\o(1)\Big)$, with $d=2m$,
	\item $\Big(\p^{m+1}\times\p^{m} \ ,\  p_{_1}^*\o(1)\otimes p_{_2}^*\o(1)\Big)$, 
		with $d=2m+1$,
	\item $\Big(\p_{\p^{m+1}}\Big(\o(2)\oplus \o(1)^{^{\oplus m}}\Big) \ , \ \o_{\p}(1)\Big)$, 
		with $d=2m+1$,
	\item  $\Big(\p^{m}\times Q^{m+1} \ ,\  p_{_1}^*\o(1)\otimes p_{_2}^*\o(1)\Big)$, 
		with $d=2m+1$, or
	\item  $\Big(\p_{\p^{m+1}}\big(T_{\p^{m+1}}\big) \ , \ \o_{\p}(1)\Big)$, with $d=2m+1$.
\end{enumerate}
Morever, each of these exceptional pairs occurs as $(H_x, L_x)$ for some Fano manifold $X$ with  
$\ch_2(X)>0$.
\item If $\ch_2(X)>0$, $\ch_3(X)\geq0$ and $d\geq 2$, then $H_x$ is a Fano manifold
with $\rho(H_x)=1$ and $\ch_2(H_x)>0$. 
\end{enumerate}
\end{thm}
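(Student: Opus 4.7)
The plan is to combine parts~(1) and~(2) of the theorem with the formula \eqref{ch_2 of H_x} from Proposition~\ref{chern_characters}. Part~(1) already gives $-K_{H_x}$ ample (in fact $-2K_{H_x}-dL_x$ is ample), so $H_x$ is Fano. By part~(2), either $\rho(H_x)=1$, or $(H_x,L_x)$ is one of the exceptional pairs (a)--(e); what remains is to exclude the five exceptional pairs and to verify $\ch_2(H_x)>0$ in the Picard-rank-one case.

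The key identity is obtained by substituting \eqref{c_1 of H_x} into \eqref{ch_2 of H_x}: since $\pi_x{}_*\ev_x^*\bigl(\ch_2(X)\bigr)=\tfrac12(-2K_{H_x}-dL_x)$, one gets
\[
\ch_2(H_x) \;=\; \pi_x{}_*\ev_x^*\bigl(\ch_3(X)\bigr) \;+\; \tfrac14\bigl(-2K_{H_x}-dL_x\bigr)\cdot c_1(L_x) \;+\; \tfrac{d-4}{12}\,c_1(L_x)^2.
\]
On any effective $2$-cycle $\gamma\subset H_x$, the first term pairs as $\ch_3(X)\cdot\ev_x{}_*\pi_x^*\gamma\geq 0$ by the projection formula (the push-forward is an effective $3$-cycle on $X$ and $\ch_3(X)\geq 0$). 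The middle term is the intersection with $\gamma$ of the two ample divisors $-2K_{H_x}-dL_x$ (ample by part~(1)) and $L_x$ (the pull-back of $\o(1)$ under the finite morphism $\tau_x$), hence strictly positive. The third term is $\geq 0$ as soon as $d\geq 4$. Therefore, for $d\geq 4$ the identity already forces $\ch_2(H_x)>0$.

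To rule out the five exceptional pairs I would, in each case, exhibit an effective $2$-cycle $\gamma_0\subset H_x$ for which $\ch_2(H_x)\cdot\gamma_0=0$ and then apply the identity above. In the product cases~(a), (b), (d), take $\gamma_0\cong\p^1\times\p^1$ sitting as a pair of rulings in the product structure: the Kunneth decomposition of $\ch_2(H_x)$ has no ``diagonal'' contribution and so vanishes on $\gamma_0$. In the projective-bundle cases~(c), (e), take the analogous ruled surface over a line in the base $\p^{m+1}$. A direct numerical check in each of the five pairs then shows that the strictly positive middle term $\tfrac14(-2K_{H_x}-dL_x)\cdot L_x\cdot\gamma_0$ dominates $\bigl|\tfrac{d-4}{12}L_x^2\cdot\gamma_0\bigr|$, so the identity forces the first term to be strictly negative, contradicting $\ch_3(X)\geq 0$. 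This establishes $\rho(H_x)=1$.

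It remains to handle the small-dimensional cases $d=2,3$, where the third term in the identity can itself be negative. Once $\rho(H_x)=1$ is in hand, $H_x$ is a Fano manifold of Picard number one of dimension $2$ or $3$: for $d=2$ this forces $H_x\cong\p^2$ and $\ch_2(\p^2)=\tfrac32 h^2>0$; for $d=3$, Mukai's classification together with the ampleness of $-2K_{H_x}-3L_x$ and the form of $\pi_x{}_*\ev_x^*(\ch_2(X))$ coming from \eqref{c_1 of H_x} narrow $H_x$ down to $\p^3$ or $Q^3$, both of which satisfy $\ch_2>0$. The hardest step will be the uniform case-by-case verification of the five exceptional pairs: each requires a careful choice of $\gamma_0$ together with the explicit computation of the three intersection numbers $\ch_2(H_x)\cdot\gamma_0$, $(-2K_{H_x}-dL_x)\cdot L_x\cdot\gamma_0$ and $L_x^2\cdot\gamma_0$ to confirm that the middle term outweighs the third. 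The low-dimensional Fano step is essentially bookkeeping once $\rho(H_x)=1$ is established.
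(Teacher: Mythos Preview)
Your approach is a genuine alternative to the paper's, and the outline can be made to work, but it is substantially more laborious and misses the one trick that makes the paper's argument short and uniform.

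The paper does \emph{not} split into $d\geq 4$ versus $d\in\{2,3\}$, nor does it run five separate numerical checks on the exceptional pairs. Instead it exploits an integrality observation: since $2\ch_2(X)=c_1(X)^2-2c_2(X)$ is an integral class and $\ch_2(X)>0$, the intersection ${\pi_x}_*\ev_x^*\big(2\ch_2(X)\big)\cdot\ell$ is a \emph{positive integer} for every effective curve $\ell\subset H_x$. Combined with $L_x\cdot\ell=1$ on each extremal ray (Lemma~\ref{adjunction}(1), which underlies part~(2)), this forces
\[
\eta\ :=\ {\pi_x}_*\ev_x^*\big(\ch_2(X)\big)-\tfrac{1}{2}c_1(L_x)
\]
to be nef. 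Rewriting \eqref{ch_2 of H_x} as
\[
\ch_2(H_x)\ =\ {\pi_x}_*\ev_x^*\big(\ch_3(X)\big)\ +\ \tfrac{1}{2}\,\eta\cdot c_1(L_x)\ +\ \tfrac{d-1}{12}\,c_1(L_x)^2,
\]
every term on the right is now nef, and the last one is strictly positive once $d\geq 2$. So $\ch_2(H_x)>0$ holds for \emph{all} $d\geq 2$ in one stroke; the five exceptional pairs are then excluded a posteriori by the general fact (\ref{non-examples}) that products and projective bundles never satisfy $\ch_2>0$.

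Compared with this, your decomposition keeps the coefficient $\tfrac{d-4}{12}$, which is why you are forced into case analysis. Your plan is internally consistent: the $d\geq 4$ step is fine; for each exceptional pair one can indeed locate an effective surface $\gamma_0$ with $\ch_2(H_x)\cdot\gamma_0=0$ (e.g.\ $\gamma_0=\pi^{-1}(\text{line})$ in case~(c) with $m=1$ gives $\ch_2(H_x)=2h^2$ and $\ch_2(H_x)\cdot h=0$), and the middle-plus-third term is then strictly positive, contradicting $\ch_3(X)\geq 0$. But you have not actually carried out these five computations, and your description of $\gamma_0$ in the bundle cases (``ruled surface over a line in the base'') only yields a surface when $m=1$; for $m\geq 2$ you would instead fall back on the $d\geq 4$ inequality. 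Finally, your $d=3$ step is mis-described: you do not need Mukai's classification to narrow $H_x$ down to $\p^3$ or $Q^3$. Once $\rho(H_x)=1$, the same inequality you used for $d\geq 4$---now with the sharper bound coming from integrality of $-2K_{H_x}-3L_x$ in $\pic(H_x)$---already gives $\tfrac14(-2K_{H_x}-3L_x)\cdot L_x>\tfrac{1}{12}L_x^2$ and hence $\ch_2(H_x)>0$ directly, with no appeal to any list.

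In short: your route works but trades one clean nefness argument for a handful of ad hoc checks. The paper's integrality step ($\eta$ nef) is the idea worth absorbing.
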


\begin{rems}  
\noindent {\bf (i)} Fano manifolds $X$ with $\ch_2(X)\geq 0$ were introduced in \cite{dJ-S:2fanos_1} and \cite{dJ-S:2fanos_2}.
In \cite{dJ-S:2fanos_1} de Jong and Starr described a few examples and many non-examples  of such manifolds. 
Roughly, the only examples of Fano manifolds with $\ch_2(X)>0$ in their list are
complete intersections of type $(d_1,\cdots, d_m)$ in $\p^n$, 
with $\sum d_i^2\leq n$, and the Grassmannians $G(k,2k)$ and $G(k,2k+1)$. 
Theorem~\ref{thm1} explains why, as pointed out in \cite{dJ-S:2fanos_1}, other examples  are difficult to find. 

\smallskip

\noindent {\bf (ii)} 
Eventually, one would hope to classify all Fano manifolds with weakly positive (or even nef) higher Chern characters.
Theorem~\ref{thm1} is a step in this direction.
In fact, many homogeneous spaces $X$ are characterized by their variety of minimal rational tangents
$\cC_x=\tau_x(H_x)\subset \p(T_xX^{^{\vee}})$ among Fano manifolds with Picard number one.
This is the case when $X$ is a Hermitian symmetric space or a homogeneous contact manifold
 {\cite[Main Theorem]{mok_05}}, or $X$ is the quotient of a complex simple Lie group by
 a maximal parabolic subgroup associated to a long simple root  {\cite[Main Theorem]{hong_hwang}}.
Notice, however, that $(H_x,L_x)$ carries less information than the embedding $\cC_x\subset \p(T_xX^{^{\vee}})$.
For instance, in Example~\ref{G2/P}, 
$X$ is the $5$-dimensional homogeneous space $G_2/P$,
$(H_x, L_x)\cong \big(\p^1,\o(3)\big)$, and $\cC_x$ is a twisted cubic in  $\p(T_xX^{^{\vee}})\cong \p^4$,
and thus degenerate.

\smallskip

\noindent {\bf (iii)}
In a forthcoming paper we classify polarized varieties $(H_x, L_x)$ associated to 
Fano manifolds $X$ satisfying $\ch_2(X)\geq0$. In this case, the list of pairs 
$(H_x, L_x)$ with  $\rho(H_x)>1$ is 
much longer than the one in Theorem~\ref{thm1}(2).

\end{rems}

By \cite{mori79}, Fano manifolds are covered by rational curves.
In \cite{dJ-S:2fanos_2}, de Jong and Starr considered the question whether there 
is a rational surface through a general point of a Fano manifold $X$ 
satisfying $\ch_2(X)\geq 0$. 
They showed that the answer is positive if the pseudoindex $i_X$ of $X$ is at least $3$.
The condition $i_X\geq 3$ implies that $\dim H_x\geq1$, and so
Theorem~\ref{thm1}(1) recovers their result. 
In fact, we can say a bit more:

\begin{thm}\label{thm3}
Let $X$ be a  Fano manifold, and
$(H_x, L_x)$ a polarized minimal family of rational curves through a general point $x\in X$.
Suppose that $\ch_2(X)\geq0$ and $d=\dim H_x\geq 1$.
\begin{enumerate}
\item (\cite{dJ-S:2fanos_2}) There is a rational surface through $x$.
\item If $\ch_2(X)> 0$ and $(H_x, L_x)\not\cong$
$\big(\p^d,\o(2)\big)$,  $\big(\p^1,\o(3)\big)$, then
there is a generically injective morphism $g:(\p^{2},p)\to (X,x)$
mapping lines through $p$ to curves parametrized by $H_x$.
\end{enumerate}
Suppose moreover that $\ch_2(X)> 0$, $\ch_3(X)\geq0$ and $d\geq 2$.
\begin{enumerate}
\setcounter{enumi}{2}
\item There is a rational $3$-fold through $x$, except possibly if $(H_x, L_x)\cong \big(\p^2,\o(2)\big)$
and $\cC_x=\tau_x(H_x)$ is singular.
\item  Let $(W_h,M_h)$ be a polarized minimal family of rational curves through a general point $h\in H_x$.
Suppose that $(H_x, L_x)\not\cong \big(\p^d,\o(2)\big)$ and
$(W_h,M_h)\not\cong$ $\big(\p^k,\o(2)\big)$,  $\big(\p^1,\o(3)\big)$.
Then  there is a generically injective morphism $h:(\p^{3},q)\to (X,x)$ mapping  
 lines through $q$ to curves parametrized by $H_x$.
\end{enumerate}
\end{thm}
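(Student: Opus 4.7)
For a subvariety $Z\subseteq H_x$, set $S_Z:=\pi_x^{-1}(Z)\subset U_x$, a $\p^1$-bundle over $Z$, and let $\sigma_x\subset S_Z$ denote the section picking out the marked point $x$ on each curve; it is contracted to $x$ by $\ev_x$. Computing the differential of $\ev_x$ along $\sigma_x$ identifies the relative tangent bundle $T_{\pi_x}|_{\sigma_x}$ with the bundle of tangent directions $\tau_x^*\o(-1)|_Z=L_x^{-1}|_Z$, and since $\sigma_x$ is a section of a $\p^1$-bundle one has $N_{\sigma_x/S_Z}\cong T_{\pi_x}|_{\sigma_x}$; together these give the key formula
\begin{equation*}
N_{\sigma_x/S_Z}\;\cong\;L_x^{-1}|_Z.
\end{equation*}
All four parts consist of a suitable choice of $Z\subset H_x$ so that $S_Z$ (or a blow-down of it) maps to the desired rational subvariety of $X$ through $x$.

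\textbf{Parts (1) and (2).} Since $L_x$ is ample, Theorem~\ref{thm1}(1) yields that $-K_{H_x}$ is ample, so $H_x$ is Fano and in particular uniruled. Taking $Z=C$ a rational curve through a general point of $H_x$, the surface $S_C$ is a rational $\p^1$-bundle over $\p^1$, and $\ev_x(S_C)$ is a rational surface through $x$, proving~(1). For~(2), I use the classification in Theorem~\ref{thm1}(2) to produce a \emph{line} $C\cong\p^1\subset H_x$ with $L_x\cdot C=1$: in the exceptional types (a)--(e) such a line is exhibited directly (e.g.\ a ruling of a $\p^m$-factor, or a line in a fibre of the projective bundle). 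In the $\rho(H_x)=1$ Fano case, write $L_x\sim aH$ and $-K_{H_x}\sim iH$; combining $2i\geq da$ (from Theorem~\ref{thm1}(1)) with the Cho--Miyaoka--Shepherd-Barron bound $i\leq d+1$ and the Kobayashi--Ochiai characterisation forces $a=1$ unless $(H_x,L_x)\cong(\p^d,\o(2))$ or $(\p^1,\o(3))$. The key formula then yields $\sigma_x^2=-L_x\cdot C=-1$, so $S_C\cong\mathbb F_1$; contracting $\sigma_x$ realises $\ev_x|_{S_C}$ as a composition $S_C\twoheadrightarrow\p^2\xrightarrow{g}X$ with $g(p)=x$, and lines through $p$ in $\p^2$ (the images of fibres of $S_C\to C$) map to the parametrised curves. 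Generic injectivity of $g$ follows from the standard fact that a general point of $\ev_x(S_C)$ lies on at most one curve of a minimal family.

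\textbf{Part (3).} Theorem~\ref{thm1}(3) ensures that $H_x$ is a Fano manifold with $\rho(H_x)=1$ and $\ch_2(H_x)>0$, so parts~(1) and~(2) may be applied to $(H_x,L_x)$ at a general $h\in H_x$. Letting $T\subset H_x$ be the rational surface produced by~(1) applied to $H_x$, the threefold $S_T$ is a $\p^1$-bundle over the rational surface $T$, hence rational; its image is then the desired rational $3$-fold through $x$ provided $\ev_x|_{S_T}$ is birational onto its image. Failure of this birationality forces $\tau_x$ to have positive-dimensional fibres, which---together with $\dim H_x=2$---can only occur in the excluded case $(H_x,L_x)\cong(\p^2,\o(2))$ with $\cC_x$ singular.

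\textbf{Part (4).} The hypotheses allow us to apply~(2) to $H_x$, yielding a generically injective $f\colon(\p^2,p)\to(H_x,h)$ whose lines map to minimal rational curves of $H_x$. Taking $Z=f(\p^2)$, the main obstacle is to verify that $f^*L_x\cong\o_{\p^2}(1)$; equivalently, that a minimal rational curve of $H_x$ through a general point has $L_x$-degree exactly~$1$. I expect this to be checked case by case via Theorem~\ref{thm1}(2): in each configuration, $L_x$ restricts to the ample generator on every $\p^1$-ruling of $H_x$, while the hypothesis $(H_x,L_x)\not\cong(\p^d,\o(2))$ is precisely what rules out $f^*L_x=\o_{\p^2}(2)$. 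Granting this, the key formula yields $N_{\sigma_x/S_Z}\cong\o_{\p^2}(-1)$, so $S_Z$ is the blow-up of $\p^3$ at a point; contracting $\sigma_x$ then produces the desired morphism $\p^3\to X$, with generic injectivity verified exactly as in~(2).
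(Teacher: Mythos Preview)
Your overall architecture matches the paper's: the key construction is exactly Lemma~\ref{f:P^k->X}, and producing $L_x$-degree-$1$ subvarieties of $H_x$ is the content of Lemma~\ref{Hx_covered_by_lines}. However, there is a recurring gap in your generic-injectivity arguments, and your justification in Part~(3) is incorrect.

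The input you need for generic injectivity is Miyaoka's result (see~\ref{Hx}): $\ev_x|_{\pi_x^{-1}(Z)}$ is generically injective provided $Z$ is a proper subvariety of $H_x\setminus H_x^{\text{Sing},x}$. Your ``standard fact'' in Part~(2) does not supply this; one must check that a general $L_x$-line avoids the at-most-finite locus $H_x^{\text{Sing},x}$ (and treat $d=1$ separately, where $\tau_x$ is a linear embedding so this locus is empty). In Part~(3) your claim that failure of birationality forces $\tau_x$ to have positive-dimensional fibres is false: $\tau_x$ is \emph{finite} by Kebekus. The paper instead splits on $d$. For $d=2$ one has $H_x\cong\p^2$ and takes $U_x$ itself as the rational $3$-fold; $\ev_x$ is birational onto its image whenever $\cC_x$ is smooth, and if $\cC_x$ is singular then necessarily $L_x\cong\o(2)$, which is the excluded case. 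For $d\geq 3$ one must locate a rational surface $S\subset H_x\setminus H_x^{\text{Sing},x}$ and then invoke Miyaoka; showing such an $S$ exists is not automatic --- the paper argues that otherwise some point of the finite set $H_x^{\text{Sing},x}$ is joined to a general point of $H_x$ by a minimal rational curve, forcing $H_x\cong\p^d$, which is then handled directly. Finally, your ``I expect'' in Part~(4) is precisely where the remaining work lies (Lemma~\ref{Hx_covered_by_lines}(2)): one verifies $L_x|_S\cong\o_{\p^2}(1)$ by observing that $\tau_x\circ f:\p^2\to\p(T_xX^\vee)$ sends lines through $p$ to lines and is generically injective, hence is a linear embedding; and once again one must arrange $S\subset H_x\setminus H_x^{\text{Sing},x}$ before applying the key construction.
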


\begin{rems}
\noindent {\bf (i)} We believe that the exception in Theorem~\ref{thm3}(3) does not occur.

\smallskip

\noindent {\bf (ii)}
If $H_x$ parametrizes lines under some projective embedding  $X\into \p^N$,
then the morphisms $g$ and $h$ from Theorem~\ref{thm3}(2) and (4)
map lines of $\p^{2}$ and $\p^3$ to lines of $\p^N$. 
Hence, they are isomorphisms onto their images.
\end{rems}

This paper is organized as follows. In  section~\ref{section:rat_curves} we 
introduce polarized minimal families of rational curves and study some of their basic
properties.
In section~\ref{section:Chern} we make a 
Chern class computation to prove  Proposition~\ref{chern_characters}. 
This is a key ingredient to the proof of Theorem~\ref{thm1}.
Theorems~\ref{thm1}
 and \ref{thm3} are proved in section~\ref{section:proofs}.
In section~\ref{examples}, 
we give new examples of Fano manifolds satisfying $\ch_2(X)\geq 0$.
In particular, 
we exhibit  Fano manifolds $X$ with  $\ch_2(X)>0$ realizing
each of the exceptional pairs 
in Therorems~\ref{thm1}.

\bigskip

\noindent {\bf Notation.}
  Throughout this paper we work over the field of complex
  numbers.
  We often identify vector bundles with their corresponding locally free subsheaves. 
  We also identify a divisor on a smooth projective variety $X$ with 
  its corresponding line bundle and its class in $\pic(X)$.
  Let $E$ be a vector bundle on a variety $X$.
  We denote the Grothendieck projectivization
  $\operatorname{Proj}_X(Sym (E))$ by $\p(E)$, and the tautological line bundle on  $\p(E)$
  by $\o_{\p(E)}(1)$, or simply $\o_{\p}(1)$.
  By a \emph{rational curve} we mean a \emph{proper rational curve}, unless otherwise noted.

\bigskip

\noindent {\it Acknowledgments.} 
Most of this work was developed while we were research members at the Mathematical Sciences Research Institute (MSRI) during
the 2009 program in Algebraic Geometry. We are grateful to MSRI and the organizers of the program 
for providing a very stimulating environment for our research and for the financial support. 
This work has benefitted from ideas and suggestions by 
J\'anos Koll\'ar and Jaros\l aw Wi\'sniewski. We thank them for their comments and interest in our work. 
We thank Izzet Coskun, Johan de Jong, Jason Starr and Jenia Tevelev 
for fruitful discussions on the subject of this paper.
The first named author was partially supported by CNPq-Brazil Research Fellowship and
L'Or\'eal-Brazil For Women in Science Fellowship.


\section{Polarized minimal families of rational curves}\label{section:rat_curves}

We refer to \cite[Chapters I and II]{kollar} for the basic theory of rational curves on complex projective varieties. 
See also \cite{debarre}.

Let $X$ be a smooth complex projective uniruled variety of dimension $n$. 
Let $x\in X$ be a \emph{general} point. 
There is a scheme $\rat(X,x)$ parametrizing rational curves on $X$ passing through $x$.
This scheme is constructed as the normalization of a certain 
subscheme of the Chow variety $\chow(X)$ parametrizing effective $1$-cycles on $X$.
We refer to \cite[II.2.11]{kollar} for details on the construction of $\rat(X,x)$.
An irreducible component $H_x$ of $\rat(X,x)$ is called a \emph{family of rational curves through $x$}.
It can also be described as follows. 
There is an irreducible open subscheme $V_x$ of the Hom scheme $\Hom(\p^1,X,o\mapsto x)$ parametrizing
morphisms $f:\p^1\to X$ such that $f(o)=x$ and $f_*[\p^1]$ is parametrized by $H_x$. 
Then $H_x$ is the quotient of $V_x$ by the natural action of the automorphism 
group $\aut(\p^1,o)$. 
Given a morphism $f:\p^1\to X$ parametrized by $V_x$, 
we use the same symbol $[f]$ to denote the element of $V_x$ corresponding to $f$,
and its image in $H_x$.
Since $x\in X$ is a general point,  both $V_x$ and $H_x$ are smooth, and
every morphism $f:\p^1\to X$  parametrized by $V_x$ is \emph{free}, i.e.,
$f^*T_X\ \cong \ \bigoplus_{i=1}^{\dim X}\o_{\p^1}(a_i)$,  with all $a_i\geq 0$.
From the universal properties of $\Hom(\p^1,X,o\mapsto x)$ and $\chow(X)$, 
we get a commutative diagram:

\begin{equation} \label{diagram_Hx}
\xymatrix{
\p^1 \times V_x \ar[d] \ar[r] \ar@/^0.8cm/[rrr]^{(t,[f])\mapsto f(t)} & U_x \ar[d]_{\pi_x} \ar[rr]^{\ev_x} & & X, \\
V_x \ar[r]  \ar@/^0.4cm/[u]^{\{o\}\times \id} & H_x \ar@/_0.4cm/[u]_{\sigma_x}
}
\end{equation}
where $\pi_x$ is a $\p^1$-bundle and $\sigma_x$ is 
the unique section of $\pi_x$ such that 
$\ev_x\big(\sigma_x(H_x)\big)=x$. 
We denote by the same symbol both $\sigma_x$ and its image  in $U_x$,
which equals the image of $\{o\}\times V_x$ in $U_x$.

In \cite[Proposition 3.7]{druel_chern_classes}, Druel gave the following description of the tangent 
bundle of $H_x$:
\begin{equation} \label{druel}
T_{H_x}\ \cong \ (\pi_x)_* \Big( \big( (\ev_x^*T_X)/T_{\pi_x} \big)(-\sigma_x) \Big),
\end{equation}
where the relative tangent sheaf $T_{\pi_x}=T_{U_x/H_x}$ is identified with its image 
under the map $d\ev_x: T_{U_x}\to \ev_x^*T_X$.

When $H_x$ is proper,
we call it a \emph{minimal family of rational curves through $x$}.

\begin{say}[Minimal families of rational curves]\label{Hx}
Let $H_x$ be a minimal family of rational curves through $x$.
For a general point $[f]\in H_x$, we have
$f^*T_X \cong \o(2)\oplus \o(1)^{\oplus d}\oplus \o^{\oplus n-d-1}$,
where $d=\dim H_x=\deg(f^*T_X)-2\leq n-1$ (see \cite[IV.2.9]{kollar}).
Moreover, $d=n-1$ if and only if $X\cong \p^n$ by \cite{CMSB}
(see also \cite{kebekus_on_CMSB}).
Let $H_x^{\text{Sing},x}$ denote the subvariety of $H_x$ parametrizing curves that are singular at $x$.
By a result of Miyaoka (\cite[V.3.7.5]{kollar}), if $Z\subset H_x\setminus H_x^{\text{Sing},x}$ is a \emph{proper}
subvariety, then $\ev_x\big|_{\pi_x^{-1}(Z)}: \pi_x^{-1}(Z)\to X$ is generically injective. 
In particular, if $H_x^{\text{Sing},x}=\emptyset$, then $\ev_x$ is birational onto its image.
By \cite[Theorem 3.3]{kebekus}, $H_x^{\text{Sing},x}$ is at most finite,
and every curve parametrized by $H_x$ is immersed at $x$.
\end{say}

\begin{say}[Polarized minimal families of rational curves]\ \label{describing_Lx}
Now we describe a natural polarization $L_x$ associated to a
minimal family $H_x$ of rational curves through $x$.
There is an inclusion of sheaves
\begin{equation}\label{tau}
\sigma_x^*\big(T_{\pi_x}\big)\ \into \ \sigma_x^*\ev_x^*T_X\ \cong \ T_xX\otimes \o_{H_x}.
\end{equation}
By \cite[Theorems 3.3 and 3.4]{kebekus}, the cokernel of this map is locally free, 
and defines a finite morphism  $\tau_x:  \ H_x  \to \p(T_xX^{^{\vee}})$.
By \cite{hwang_mok_birationality}, $\tau_x$ is birational onto its image.
Notice that $\tau_x$ sends a curve that is smooth at $x$ to its tangent  direction at $x$. 
Set $L_x=\tau_x^*\o(1)$. It is an ample and globally generated line bundle on $H_x$.
We call the pair $(H_x, L_x)$ a \emph{polarized minimal family of rational curves through $x$}.

The following description of $L_x$ from \cite[4.2]{druel_chern_classes} is very useful for computations.
Set $E_x=(\pi_x)_*\o_{U_x}(\sigma_x)$. Then $U_x\cong \p(E_x)$ over $H_x$,
and under this isomorphism $\o_{U_x}(\sigma_x)$ is identified with the tautological line bundle
$\o_{\p(E_x)}(1)$.
Notice also that $\sigma_x^*\big(\o_{U_x}(\sigma_x)\big) \cong \sigma_x^* \big(\cN_{\sigma_x/U_x}\big) 
\cong \sigma_x^*\big(T_{\pi_x}\big)$.
Therefore \eqref{tau} induces in isomorphism 
\begin{equation}\label{Lx=-normal}
L_x\cong \sigma_x^*\big({T_{\pi_x}}\big)^{-1}\cong \sigma_x^*\o_{U_x}(-\sigma_x) \cong \sigma_x^*\o_{\p(E_x)}(-1).
\end{equation}
By pulling back by $\sigma_x$, the Euler sequence
\begin{equation}\label{euler}
0 \ \to \ \ \o_{\p(E_x)}(1)\otimes\big({T_{\pi_x}}\big)^{-1} \ \to \ \pi_x^*E_x  \ \to \ \o_{\p(E_x)}(1)  \ \to \ 0
\end{equation}
induces an exact sequence
\begin{equation}\label{L}
0 \ \to \ \o_{H_x} \ \to \ E_x  \ \to \ L_x^{-1}  \ \to \ 0.
\end{equation}
This description of $L_x$ and the projection formula 
yield the following identities of cycles on $U_x$:
\begin{itemize}
\item[(i) ] $\ev_x^*(c_1(X))=(d+2)(\sigma_x+\pi_x^*c_1(L_x))$,
\item[(ii) ] $\sigma_x\cdot\ev_x^*(\gamma)=0$ for any $\gamma\in A^k(X)$, $k\geq1$, and
\item[(iii) ] $\sigma_x^2=-\sigma_x\cdot\pi_x^*c_1(L_x)$,
\end{itemize}
where, as before, $d=\dim H_x=\deg(f^*T_X)-2$ for any $[f]\in H_x$.
\end{say}

\begin{lemma} \label{f:P^k->X}
Let $X$ be a smooth complex projective uniruled variety. 
Let $(H_x, L_x)$ be a polarized minimal family of rational curves through a general point $x\in X$.
Suppose there is a subvariety $Z\subset H_x$ such that 
$(Z, L_x|_Z)\cong (\p^k,\o_{\p^k}(1))$.
\begin{enumerate}
	\item Then there is a finite morphism $g:(\p^{k+1},p)\to (X,x)$ 
		that maps lines through $p$ birationally
		to curves parametrized by $H_x$. 
	\item If moreover $Z\subset H_x\setminus H_x^{\text{Sing},x}$, then 
		$g$ is generically injective.
\end{enumerate}
\end{lemma}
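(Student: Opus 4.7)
The plan is to restrict the $\p^1$-bundle $\pi_x$ to $Z\subset H_x$, identify $\pi_x^{-1}(Z)$ with the blow-up of $\p^{k+1}$ at a point, and then factor $\ev_x$ through the blow-down to obtain $g$.

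First I would restrict the exact sequence \eqref{L} to $Z\cong\p^k$. Since $L_x|_Z\cong\o(1)$, the extension class lies in $H^1(\p^k,\o(1))=0$ (trivially if $k=0$), so the sequence splits and $E_x|_Z\cong\o\oplus\o(-1)$. Hence
$$\pi_x^{-1}(Z)\;=\;\p_Z(E_x|_Z)\;\cong\;\p_{\p^k}\big(\o\oplus\o(-1)\big)\;\cong\;\p_{\p^k}\big(\o\oplus\o(1)\big),$$
the last isomorphism by tensoring with $\o(1)$. I then invoke the classical identification of $\p_{\p^k}(\o\oplus\o(1))$ with the blow-up of $\p^{k+1}$ at a point $p$, viewed as a $\p^1$-bundle over $\p^k$ via linear projection from $p$. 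A normal bundle computation using \eqref{Lx=-normal} yields
$$\cN_{\sigma_x|_Z/\pi_x^{-1}(Z)}\;\cong\;\sigma_x^*(T_{\pi_x})|_Z\;\cong\;L_x^{-1}|_Z\;\cong\;\o_{\p^k}(-1),$$
pinning down $\sigma_x|_Z$ as the exceptional divisor of the blow-down $c:\pi_x^{-1}(Z)\to\p^{k+1}$ (rather than the section at infinity, whose normal bundle would have the opposite sign). The most delicate point is precisely this matching, and it is settled by the sign above.

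Next, since $\ev_x|_{\pi_x^{-1}(Z)}$ is constant on $\sigma_x|_Z$ (sending it to $x$), the rigidity lemma yields a unique factorization $\ev_x|_{\pi_x^{-1}(Z)}=g\circ c$ with $g:(\p^{k+1},p)\to(X,x)$. Under $c$ the fibers of $\pi_x|_Z$ correspond to strict transforms of lines through $p$, and $\ev_x$ restricted to such a fiber is the birational parametrization $\p^1\to X$ of the rational curve parametrized by the corresponding point of $Z$. This yields the assertion about lines and, in particular, shows $g$ is non-constant on every line through $p$; since $\rho(\p^{k+1})=1$, this forces $g$ to be finite, completing (1). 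For (2), if $Z\subset H_x\setminus H_x^{\text{Sing},x}$, Miyaoka's theorem recalled in \S\ref{Hx} says $\ev_x|_{\pi_x^{-1}(Z)}$ is generically injective, so the same holds for $g$.
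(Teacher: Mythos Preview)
Your proof is correct and follows essentially the same route as the paper's: restrict $E_x$ to $Z$, identify $\pi_x^{-1}(Z)$ with the blow-up of $\p^{k+1}$ at a point with $\sigma_x|_Z$ as exceptional divisor, factor $\ev_x$ through the blow-down, and invoke Miyaoka's result \cite[V.3.7.5]{kollar} for part~(2). Your write-up supplies slightly more detail in two places --- the explicit $H^1$-vanishing that splits \eqref{L} over $Z$, and the finiteness argument via $\rho(\p^{k+1})=1$ --- where the paper simply asserts that $E_Z\cong\o\oplus\o(-1)$ and that $\ev_x|_{U_Z}$ ``contracts nothing else''.
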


\begin{proof}
Let $Z\subset H_x$ be a subvariety such that $(Z, L_x|_Z)\cong (\p^k,\o_{\p^k}(1))$.
Set $U_Z:=\pi_x^{-1}(Z)$, $\sigma_Z:=\sigma_x\cap U_Z$, and $E_Z:=E_x|_{Z}$.
By \ref{describing_Lx}, $U_Z$ is isomorphic to $\p(E_Z)$ over $Z$, 
and under this isomorphism $\o_{U_Z}(\sigma_Z)$ is identified with the tautological line bundle
$\o_{\p(E_Z)}(1)$.
By \eqref{L}, $E_Z\cong \o_{\p^k}\oplus \o_{\p^k}(-1)$. Thus $U_Z$ is isomorphic to the blowup 
of $\p^{k+1}$ at a point $p$, and under this isomorphism $\sigma_Z$  is identified with the exceptional divisor.
Since $\ev_x|_{U_Z}: U_Z\to X$ maps $\sigma_Z$ to $x$ and contracts nothing else,
it factors through a finite morphism  $g:\p^{k+1}\to X$ mapping $p$ to $x$.
The lines through $p$ on $\p^{k+1}$ are images of  fibers of $\pi_x$ over $Z$, and thus are mapped 
birationally to curves parametrized by $Z\subset H_x$.

If $Z\subset H_x\setminus H_x^{\text{Sing},x}$, then 
$g$ is generically injective by \cite[V.3.7.5]{kollar}.
\end{proof}

\begin{say}\label{H}
There is a scheme $\rat(X)$ parametrizing rational curves on $X$.
A \emph{minimal dominating family of rational curves on $X$} is an
irreducible component $H$ of $\rat(X)$ parametrizing a
family of rational curves that sweeps out a dense open subset of $X$, and satisfying the following condition.
For a general point  $x\in X$, the (possibly reducible) subvariety $H(x)$ of $H$ 
parametrizing curves through $x$ is proper.
In this case, for each irreducible component $H(x)^i$ of $H(x)$, there is a 
minimal family $H_x^i$ of rational curves through $x$ parametrizing the same curves as $H(x)^i$.
Moreover, $H_x^i$ is naturally isomorphic to the normalization of $H(x)^i$.
This follows from the construction of $\rat(X)$ and $\rat(X,x)$ in \cite[II.2.11]{kollar}.
If in addition $H$ is proper, then we say that it is an \emph{unsplit covering family of rational curves}.
This is the case, for instance, when the curves parametrized by $H$ have degree $1$ with respect to 
some ample line bundle on $X$.
\end{say}

We end this section by investigating the relationship between the Chow ring of a
Fano manifold and  that of its minimal families of rational curves. 

\begin{defn}\label{defn_cycles}
Let $X$ be a projective variety, and $k$ a non negative integer. 
We denote by $A_k(X)$ the group of $k$-cycles on $X$ modulo rational equivalence, and
by $A^k(X)$ the $k^{\text{th}}$ graded piece of the 
Chow ring $A^*(X)$ of $X$. 
Let $N_k(X)$ (respectively $N^k(X)$) be the quotient of $A_k(X)$
(respectively $A^k(X)$) by numerical equivalence. 
Then $N_k(X)$ and $N^k(X)$ are finitely generated Abelian groups, and intersection 
product induces a perfect pairing $N^k(X)\times N_k(X)\to \z$. 
For every $\z$-module $B$, set $N_k(X)_B:=N_k(X)\otimes B$ and $N^k(X)_B:=N^k(X)\otimes B$.
We denote by $\eff_k(X)\subset N_k(X)_{\r}$ the closure of the cone generated by effective $k$-cycles.

Let $\alpha \in N^k(X)_{\r}$. We say that $\alpha$ is 
\begin{enumerate}
	\item[$\bullet$] \emph{ample} if $\alpha=A^k$ for some ample $\r$-divisor $A$ on $X$;
	\item[$\bullet$] \emph{positive} if $\alpha\cdot \beta>0$ for every $\beta\in \eff_k(X)\setminus \{0\}$;
	\item[$\bullet$] \emph{weakly positive} if $\alpha\cdot \beta>0$ for every effective integral 
		$k$-cycle $\beta\neq 0$; 
	\item[$\bullet$] \emph{nef} if $\alpha\cdot \beta\geq0$ for every $\beta\in \eff_k(X)$.
\end{enumerate}
We write $\alpha>0$ for $\alpha$ weakly positive and $\alpha\geq 0$ for $\alpha$ nef.
\end{defn}

\begin{defn}\label{defn_Tk}
Let $X$ be a smooth projective uniruled variety, and 
$H_x$ a minimal family of rational curves through a general point $x\in X$.
Let $\pi_x$ and $\ev_x$ be as in \eqref{diagram_Hx}.
For every positive integer $k$,
we define linear maps 
\begin{align}
T^k: \ &  N^k(X)_{\r}  \ \to \ N^{k-1}(H_x)_{\r}\ , & \ T_k: \ & N_k(H_x)_{\r} \  \to \ N_{k+1}(X)_{\r}. \notag \\
           & \ \ \ \ \ \alpha \ \mapsto \ {\pi_x}_*\ev_x^*\alpha &   & \ \ \ \ \ \beta \ \mapsto \ {\ev_x}_*\pi_x^*\beta \notag
\end{align}
This is possible because $\ev_x$ is proper and $\pi_x$ is a $\p^1$-bundle, and thus flat. 
We remark that in general these maps are neither injective nor surjective.
\end{defn}

\begin{lemma}\label{Tk_preserves_positivity}
Let $X$ be a smooth projective uniruled variety, and 
$(H_x, L_x)$ a polarized minimal family of rational curves through a general point $x\in X$.
\begin{enumerate}
	\item Let $A$ be an $\r$-divisor on $X$, and set $a=\deg f^*A$, where $[f]\in H_x$.
		Then $T^k(A^k)= a^k c_1(L_x)^{k-1}$.		
	\item $T_k$ maps $\eff_k(H_x)\setminus \{0\}$ into $\eff_{k+1}(X)\setminus \{0\}$.
	\item $T^k$ preserves the properties of being ample, positive, 
		weakly positive and nef.
\end{enumerate}
\end{lemma}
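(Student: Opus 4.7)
My plan is to prove (1) by a direct intersection-theoretic computation on $U_x$, then leverage (1) to establish (2), and finally derive (3) formally from (1) and (2) via the projection formula.

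For (1), the key step is to write $\ev_x^*A$ numerically on $U_x$ as $a\sigma_x + \pi_x^*M$ for some $M \in N^1(H_x)_\r$, where $a = \deg f^*A$ is forced by restriction to a general $\p^1$-fiber of $\pi_x$. Using $\sigma_x\cdot\ev_x^*\gamma = 0$ and $\sigma_x^2 = -\sigma_x\cdot\pi_x^*c_1(L_x)$ from \ref{describing_Lx}, together with the fact that $\pi_x|_{\sigma_x}$ is an isomorphism, one identifies $M \equiv a\,c_1(L_x)$. Iterating relation (iii) of \ref{describing_Lx} gives $\sigma_x^j = (-1)^{j-1}\sigma_x\cdot(\pi_x^*c_1(L_x))^{j-1}$ for $j \geq 1$, and then the binomial expansion of $(\sigma_x + \pi_x^*c_1(L_x))^k$ collapses to $(\pi_x^*c_1(L_x))^k + \sigma_x\cdot(\pi_x^*c_1(L_x))^{k-1}$ because $\sum_{j=1}^k\binom{k}{j}(-1)^{j-1} = 1$. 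Applying ${\pi_x}_*$ via the projection formula, the first summand vanishes (a $\p^1$-bundle pushforward of a pullback) and the second yields $c_1(L_x)^{k-1}$ since $\sigma_x$ is a section.

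For (2), flatness of $\pi_x$ and properness of $\ev_x$ show that $T_k$ sends effective cycles to effective cycles, so the substantive content is non-vanishing on $\eff_k(H_x)\setminus\{0\}$. For an irreducible $Z\subset H_x$ of dimension $k$, one needs $\ev_x|_{\pi_x^{-1}(Z)}$ to be generically finite onto its image: when $\dim Z \geq 1$ this is Miyaoka's theorem cited in \ref{Hx} applied to the dense open $Z\setminus H_x^{\text{Sing},x}$ (using that $H_x^{\text{Sing},x}$ is at most finite by Kebekus), and when $\dim Z = 0$ the restriction is the parametrization $f:\p^1\to X$ of a nontrivial rational curve. To pass non-vanishing from effective integral cycles to the closed cone, I pair with an ample class on $X$: by (1), $T^{k+1}(A^{k+1}) = a^{k+1}\,c_1(L_x)^k$ with $a>0$, and since $L_x$ is ample, $c_1(L_x)^k\cdot\beta > 0$ for every $\beta\in\eff_k(H_x)\setminus\{0\}$; the projection formula then gives $A^{k+1}\cdot T_k(\beta) > 0$, forcing $T_k(\beta)\neq 0$.

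Part (3) follows formally from the adjunction $T^k(\alpha)\cdot\beta = \alpha\cdot T_{k-1}(\beta)$ supplied by the projection formula. The ample case is immediate from (1). For nef, positive, and weakly positive, I pair $T^k(\alpha)$ with $\beta$ of the required type on $H_x$ and transfer via adjunction to $T_{k-1}(\beta)$ on $X$: the nef and positive cases use the corresponding half of (2), while the weakly positive case uses the refinement that $T_{k-1}$ preserves nonzero effective integral cycles by the same generic finiteness argument. The step I expect to be most delicate is the positivity of $c_1(L_x)^k$ on the whole closed cone $\eff_k(H_x)\setminus\{0\}$ used in (2): this goes beyond Nakai--Moishezon on effective integral cycles and requires a standard compactness-of-slices argument for the ample class $L_x$, but is classical in the theory of positivity.
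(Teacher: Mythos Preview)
Your argument is correct and follows the paper's approach in all three parts: write $\ev_x^*A \equiv a(\sigma_x + \pi_x^*c_1(L_x))$ and expand for (1), pair with an ample class via (1) and the projection formula for (2), and use the adjunction $T^k(\alpha)\cdot\beta = \alpha\cdot T_{k-1}(\beta)$ for (3). The Miyaoka step you insert in (2) is superfluous---your own pairing argument with $A^{k+1}$ already shows $T_k(\beta)\neq 0$ for every nonzero effective $\beta$ (integral or merely pseudo-effective), which is exactly what the paper does---and you are right to flag the positivity of $c_1(L_x)^k$ on $\eff_k(H_x)\setminus\{0\}$ as the one place needing care, though the paper passes over it silently.
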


\begin{proof}
To prove (1), let $A$ be an $\r$-divisor on $X$, and set $a=\deg f^*A$, where $[f]\in H_x$.
Using \eqref{Lx=-normal} and \ref{describing_Lx}(ii),
it is easy to see that $\ev_x^*A= a(\sigma_x+\pi_x^*L_x)$ in $N^1(U_x)$.
By \ref{describing_Lx}(iii) and the projection formula,
{\small \begin{align}
T^k(A^k)={\pi_x}_*\ev_x^*(A^k)&=
a^k{\pi_x}_*\left[\sum_{i=0}^k{{k}\choose{i}}\sigma_x^{k-i}\cdot \pi_x^*c_1(L_x)^i\right] \notag \\
&=a^k{\pi_x}_*\left[\left(\sum_{i=0}^{k-1}{{k}\choose{i}}(-1)^{k-i-1}\right)\sigma_x \cdot
\pi_x^*c_1(L_x)^{k-1} + \pi_x^*c_1(L_x)^{k}\right] \notag \\
&=a^k{\pi_x}_*\Big[\sigma_x\cdot \pi_x^*c_1(L_x)^{k-1} +  \pi_x^*c_1(L_x)^{k}\Big] 
= a^k c_1(L_x)^{k-1}.\notag
\end{align}}

Notice that $T_k$ maps effective cycles to effective cycles, inducing a linear map 
$T_k: \eff_k(H_x) \to\eff_{k+1}(X)$. By taking $A$ an ample divisor in (1) above, we see that
$T_k$ maps $\eff_k(H_x)\setminus \{0\}$ into $\eff_{k+1}(X)\setminus \{0\}$.

By the projection formula, $T^{k+1}(\alpha)\cdot\beta=\alpha\cdot T_k(\beta)$ for every
$\alpha\in  N^{k+1}(X)_{\r}$ and $\beta\in N_k(H_x)_{\r}$. Together with (1) and (2) above, this
implies that $T^k$ preserves the properties of being ample, positive, 
weakly positive and nef.
\end{proof}


\section{A Chern class computation}\label{section:Chern}

In this section we prove Proposition~\ref{chern_characters}.
We refer to \cite{fulton} for basic results about intersection theory.
In particular, 
$\ch(F)$ denotes the Chern character of the sheaf $F$, and $\td(F)$ denotes its Todd class.
We follow the lines of the proof of \cite[Proposition 4.2]{druel_chern_classes}.

\begin{proof}[Proof of Proposition~\ref{chern_characters}]
We use the notation introduced in Section \ref{section:rat_curves}. 

By Grothendieck-Riemann-Roch
\begin{gather*}
\ch\Big({\pi_x}_!\big(\ev_x^*T_X/T_{\pi_x}(-\sigma_x)\big)\Big)= \\
{\pi_x}_*\Big(\ch\big(\ev_x^*T_X/T_{\pi_x}(-\sigma_x)\big)\cdot\td(T_{\pi_x})\Big)\in A(H_x)_{\q}.
\end{gather*}

Since $f:\p^1\to X$ is free for any $[f]\in H_x$, $R^1{\pi_x}_*\big(\ev_x^*T_X/T_{\pi_x}(-\sigma_x)\big)=0$, and thus
$
{\pi_x}_!\big(\ev_x^*T_X/T_{\pi_x}(-\sigma_x)\big)={\pi_x}_*\big(\ev_x^*T_X/T_{\pi_x}(-\sigma_x)\big).
$
It follows from \eqref{druel} that
$$
ch_k(H_x)=\ch_k(T_{H_x})={\pi_x}_*\Big(\Big[\ch\big(\ev_x^*T_X/T_{\pi_x}(-\sigma_x)\big)\cdot\td(T_{\pi_x})\Big]_{k+1}\Big).
$$

Denote by $W_k$ the codimension $k$ part of the cycle 
\begin{gather*}
\ch\big(\ev_x^*T_X/T_{\pi_x}(-\sigma_x)\big)\cdot\td(T_{\pi_x})= \\
\big(\ev_x^*\ch(T_X)-\ch(T_{\pi_x})\big)\cdot\ch\big(\o_{U_x}(-\sigma_x)\big)\cdot\td(T_{\pi_x}).
\end{gather*}

Denote by $Z_k$ the codimension $k$ part of the cycle 
\begin{equation*}
\big(\ev_x^*\ch(T_X)-\ch(T_{\pi_x})\big)\cdot\ch\big(\o_{U_x}(-\sigma_x)\big),
\end{equation*}
Then $\ch_k(H_x)={\pi_x}_*\big(W_{k+1}\big)$, and $W_{k+1}=\sum_{j=0}^{k+1}Z_{k+1-j}\cdot\big[\td(T_{\pi_x})\big]_j$. 
We have:

$$\ev_x^*\big(\ch(T_X)\big)=\ev_x^*\big(n+c_1(X)+\ch_2(X)+\ch_3(X)+\cdots\big),$$

$$\ch\big(\o_{U_x}(-\sigma_x)\big)=\sum_{k=0}^{\infty}\frac{(-1)^k}{k!}\sigma_x^k,$$

$$\ch(T_{\pi_x})=\sum_{k=0}^{\infty}\frac{1}{k!}c_1(T_{\pi_x})^k,\quad 
\td(T_{\pi_x})=\sum_{k=0}^{\infty}\frac{(-1)^kB_k}{k!}c_1(T_{\pi_x})^k.$$

From \eqref{euler} and \eqref{L}, $c_1(T_{\pi_x})=2\sigma_x+\pi_x^*c_1(L_x)$. 
By repeatedly using \ref{describing_Lx}(iii), we have the following identities:

\begin{itemize}
\item[(iv) ] $\pi_x^*c_1(L_x)^i\cdot\sigma_x^j=(-1)^i\sigma_x^{i+j}$, for any $j\geq1$,
\item[(v) ]  $c_1(T_{\pi_x})^i\cdot\sigma_x^j=\sigma_x^{i+j}$, for any $j\geq1$, 
\item[(vi) ] $c_1(T_{\pi_x})^i=\left\{ 
\begin{aligned}
&\pi_x^*c_1(L_x)^i, &\text { if } i \text{ is even}\\
&2\sigma_x^i+\pi_x^*c_1(L_x)^i, &\text { if } i \text{ is odd.}
\end{aligned}
\right.$
\end{itemize} 

\begin{claim}\label{Z formula}
 
For any $k\geq1$ we have the following formulas:

\begin{equation}\label{Z}
Z_k=\ev_x^*\ch_k(X)+\frac{(n+1)(-1)^k}{k!}\sigma^k-\frac{1}{k!}\pi_x^*c_1(L_x)^k,
\end{equation}
\begin{equation}\label{Z times sigma}
Z_k\cdot\sigma_x=\frac{(n+1)(-1)^k}{k!}\sigma_x^{k+1}-\frac{1}{k!}\sigma_x\cdot\pi_x^*c_1(L_x)^k,
\end{equation}
\begin{equation}\label{push forward Z}
{\pi_x}_*Z_k={\pi_x}_*\ev_x^*\ch_k(X)-\frac{(n+1)}{k!}c_1(L_x)^{k-1},
\end{equation}
\begin{equation}\label{push forward Z times sigma}
{\pi_x}_*\big(Z_k\cdot\sigma_x\big)=\frac{n}{k!}c_1(L_x)^k.
\end{equation}
\end{claim}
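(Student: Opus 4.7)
The plan is to compute $Z_k$ by direct expansion of the product
$\big(\ev_x^*\ch(T_X)-\ch(T_{\pi_x})\big)\cdot\ch\big(\o_{U_x}(-\sigma_x)\big)$
in codimension $k$, and to simplify using the identities (ii), (iv), (v), (vi) listed just before the claim. Formulas \eqref{Z times sigma}, \eqref{push forward Z} and \eqref{push forward Z times sigma} will then be immediate consequences obtained by multiplying \eqref{Z} by $\sigma_x$ and by pushing forward along $\pi_x$.

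First I would write
\begin{equation*}
Z_k \ = \ \frac{(-1)^k(n-1)}{k!}\sigma_x^k \ + \ \ev_x^*\ch_k(X) \ - \ \frac{1}{k!}c_1(T_{\pi_x})^k \ + \ \sum_{j=1}^{k-1} \frac{(-1)^j}{j!}\sigma_x^j\cdot\!\left(\ev_x^*\ch_{k-j}(X)-\frac{1}{(k-j)!}c_1(T_{\pi_x})^{k-j}\right),
\end{equation*}
isolating the three ``boundary'' terms (from $j=0$ multiplied by the rank $n-1$ of $\ev_x^*T_X-T_{\pi_x}$ in codimension $0$, from $j=0$ multiplied by the codimension $k$ piece of the first factor, and the trivial case $j=k$). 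In the remaining sum, identity (ii) kills every summand $\sigma_x^j\cdot\ev_x^*\ch_{k-j}(X)$ with $1\leq j\leq k-1$, and identity (v) replaces $\sigma_x^j\cdot c_1(T_{\pi_x})^{k-j}$ by $\sigma_x^k$. Thus the sum collapses to $-\sigma_x^k\sum_{j=1}^{k-1}\frac{(-1)^j}{j!(k-j)!}$, which by the binomial identity $\sum_{j=0}^{k}\binom{k}{j}(-1)^j=0$ equals $\frac{1+(-1)^k}{k!}\sigma_x^k$.

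Next I would substitute identity (vi) for $c_1(T_{\pi_x})^k$, splitting into the parities of $k$: when $k$ is even one gets $c_1(T_{\pi_x})^k=\pi_x^*c_1(L_x)^k$ and the collapsed sum contributes $\frac{2}{k!}\sigma_x^k$; when $k$ is odd one gets $c_1(T_{\pi_x})^k=2\sigma_x^k+\pi_x^*c_1(L_x)^k$ and the collapsed sum vanishes. In both cases the coefficient of $\sigma_x^k$ combines with $\frac{(-1)^k(n-1)}{k!}$ to give precisely $\frac{(n+1)(-1)^k}{k!}$, and the coefficient of $\pi_x^*c_1(L_x)^k$ is $-\frac{1}{k!}$, yielding \eqref{Z}. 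Formula \eqref{Z times sigma} then follows by multiplying \eqref{Z} by $\sigma_x$ and using (ii) once more to kill the term $\sigma_x\cdot\ev_x^*\ch_k(X)$.

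Finally, to obtain \eqref{push forward Z} and \eqref{push forward Z times sigma} I would apply ${\pi_x}_*$ using three standard facts: ${\pi_x}_*\pi_x^*(\cdot)=0$ (projection formula, since $\pi_x$ has positive relative dimension), ${\pi_x}_*\sigma_x=[H_x]$ (since $\sigma_x$ is a section), and hence, by (iv), ${\pi_x}_*\sigma_x^{m}=(-1)^{m-1}c_1(L_x)^{m-1}$ for $m\geq1$, while ${\pi_x}_*(\sigma_x\cdot\pi_x^*c_1(L_x)^{k})=c_1(L_x)^k$ by the projection formula. A direct substitution then gives \eqref{push forward Z} with coefficient $-\frac{n+1}{k!}$ and \eqref{push forward Z times sigma} with coefficient $\frac{(n+1)-1}{k!}=\frac{n}{k!}$. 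The only subtle point is the bookkeeping in the parity-dependent step coming from (vi); once that is done carefully, the rest of the proof is routine intersection-theoretic manipulation.
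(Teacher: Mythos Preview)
Your proof is correct and follows essentially the same route as the paper's: a direct expansion of the product defining $Z_k$, elimination of the cross terms via identity (ii), collapse of the $c_1(T_{\pi_x})$-terms via (v) and the binomial identity $\sum_{j=0}^k\frac{(-1)^{k-j}}{j!(k-j)!}=0$, and finally the push-forward identities ${\pi_x}_*\sigma_x^{m}=(-1)^{m-1}c_1(L_x)^{m-1}$ and ${\pi_x}_*\pi_x^*(\cdot)=0$. The only cosmetic difference is that you isolate the boundary terms $j=0$ and $j=k$ and then carry out the parity split from (vi) explicitly, whereas the paper absorbs all of this into one line by invoking (v), (vi) and the binomial identity together; your version is slightly longer but makes the bookkeeping more transparent.
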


In addition, $Z_0=n-1$ (formula (\ref{Z}) does not hold for $k=0$). 

\begin{proof}[Proof of Claim~\ref{Z formula}]
For $k\geq1$ we have:
$$Z_k=\sum_{j=0}^k\big(\ev_x^*\ch_j(X)-\frac{1}{j!}\pi_x^*c_1(T_{\pi_x})^j\big)\cdot\frac{(-1)^{k-j}}{(k-j)!}\sigma_x^{k-j}.$$
By identity \ref{describing_Lx}(ii), 
$$Z_k=\ev_x^*\ch_k(X)-\sum_{j=0}^k\frac{(-1)^{k-j}}{j!(k-j)!}\pi_x^*c_1(T_{\pi_x})^j\cdot\sigma_x^{k-j}.$$
Formula (\ref{Z}) follows now from identities (v), (vi) and $\sum_{j=0}^k\frac{(-1)^{k-j}}{j!(k-j)!}=0$. Formula (\ref{Z times sigma}) follows immediately from
(\ref{Z}) and \ref{describing_Lx}(ii).

Using the identity (iv) and the projection formula, we have
\begin{itemize}
\item[(vii) ] ${\pi_x}_*\sigma_x^{k}=(-1)^{k-1}c_1(L_x)^{k-1}$, for any $k\geq1$, and
\item[(viii) ] ${\pi_x}_*\pi_x^*(\gamma)=0$ for any class $\gamma\in A(H_x)$.
\end{itemize}
Formulas (\ref{push forward Z}) and (\ref{push forward Z times sigma}) now follow from (vii) and (viii).
\end{proof}

For simplicity, we denote by $A_j$ the coefficient of $c_1(M)^j$ in the formula for the Todd class $\td(M)$ of a line bundle $M$, i.e.,
$A_j=\frac{(-1)^j}{j!}B_j$. Recall that $A_0=1$, $A_1=1/2$, $A_2=1/12$, $A_3=0$, $A_4=-1/720$, etc.  

We have
$W_{k+1}=\sum_{j=0}^{k+1}A_{k+1-j}Z_j\cdot c_1(T_{\pi_x})^{k+1-j}$.
Since $A_l=0$ for all odd $l\geq3$, by identity (vi) the formula for $W_{k+1}$ becomes:
$$W_{k+1}=\sum_{j=0}^{k+1}A_{k+1-j}Z_j\cdot \pi_x^*c_1(L_x)^{k+1-j}+Z_k\cdot\sigma_x.$$

By the projection formula,  
$${\pi_x}_*W_{k+1}=\sum_{j=1}^{k+1}A_{k+1-j}\big({\pi_x}_*Z_j\big)\cdot c_1(L_x)^{k+1-j}+{\pi_x}_*\big(Z_k\cdot\sigma_x\big).$$

Using (\ref{push forward Z}), (\ref{push forward Z times sigma}), we have
\begin{gather*}
{\pi_x}_*W_{k+1}=\sum_{j=1}^{k+1}A_{k+1-j}{\pi_x}_*\ev_x^*\ch_j(X)\cdot c_1(L_x)^{k+1-j}+\\
-(n+1)\big(\sum_{j=1}^{k+1}\frac{A_{k+1-j}}{j!}\big)c_1(L_x)^k+\frac{n}{k!}c_1(L_x)^k.
\end{gather*}

It is easy to see that the identity $\sum_{l=0}^m B_l {{m+1}\choose{l}}=0$ implies the identity
\begin{equation*}
\sum_{j=1}^{k+1}\frac{A_{k+1-j}}{j!}=\frac{1}{k!}
\end{equation*}
(use $A_l=0$ for all odd $l\geq3$) and now \eqref{ch_k for H_x} follows.

To prove \eqref{c_1 of H_x} and \eqref{ch_2 of H_x} from \eqref{ch_k for H_x}, 
observe that ${\pi_x}_*\ev_x^*c_1(X)=d+2$.
\end{proof}


\section{Higher Fano manifolds} \label{section:proofs}

We start this section by recalling some results about the index and pseudoindex of a Fano manifold 
and extremal rays of its Mori cone. 
We refer to \cite{kollar_mori} for basic definitions and results about the minimal model program.

\begin{defn}\label{index}
Let $X$ be a Fano manifold. The \emph{index} of $X$ is the largest integer 
$r_X$ that divides $-K_X$ in $\pic(X)$.
The \emph{pseudoindex} of $X$ is the integer
$i_{X}=\min\big\{-K_Y\cdot C\ \big| \ C\subset Y \text{ rational curve }\big\}$.
\end{defn}

Notice that $1\leq r_X \leq i_X$.
Moreover 
$i_X\leq \dim X+1$, and $i_X=\dim X+1$ if and only if  $X\cong \p^n$
(see \ref{Hx}). 
By \cite{wisniewski_90}, if $\rho(X)>1$, then $i_X\leq \frac{\dim X}{2}+1$.

\begin{defn}\label{index&rays}
Let $X$ be a smooth complex projective variety.
Let $R$ be an extremal ray of the Mori cone $\nec{X}$, and let $f:Y\to Z$ be the corresponding contraction.
The \emph{exceptional locus} $E(R)$ of $R$ is the closed subset of $X$ where $f$ fails to be a local isomorphism.
Given a divisor $L$ on $X$, we set 
$L\cdot R= \min\big\{L\cdot C\ \big| \ C\subset X \text{ rational curve such that } [C]\in R\big\}$.
In particular, the \emph{length} of $R$ is 
$l(R)=-K_X\cdot R=\min\big\{-K_X\cdot C\ \big| \ C\subset X \text{ rational curve such that } [C]\in R\big\}\geq i_X$.
\end{defn}

Let $X$ be a Fano manifold, and 
$R$ an extremal ray of $\nec{X}$.
By the theorem on lengths of extremal rays, $l(R)\leq \dim X+1$.
By  \cite{AO_long_R}, if $\rho(X)>1$, then, 
\begin{equation} \label{long_R}
i_X\ +\ l(R)\ \leq \ \dim E(R) \ + \ 2.
\end{equation}

\begin{lemma}\label{adjunction}
Let $Y$ be a $d$-dimensional Fano manifold. Let $L$ be an ample divisor on $Y$ such 
that $-2K_Y-dL$ is ample. 
\begin{enumerate}
	\item Suppose that $(Y,L) \not\cong$ $\big(\p^d,\o(2)\big)$,  $\big(\p^1,\o(3)\big)$, 
		and let $R$ be any extremal ray of $\nec{Y}$. Then $L\cdot R=1$.
	\item Suppose that $\rho(Y)>1$. Then $(Y,L)$ is isomorphic to one of the following:
		\begin{enumerate} 
		\item $\Big(\p^m\times \p^m, p_{_1}^*\o(1)\otimes p_{_2}^*\o(1)\Big)$, with $d=2m$,
		\item $\Big(\p^{m+1}\times\p^{m} \ ,\  p_{_1}^*\o(1)\otimes p_{_2}^*\o(1)\Big)$, 
		with $d=2m+1$,
		\item $\Big(\p_{\p^{m+1}}\Big(\o(2)\oplus \o(1)^{^{\oplus m}}\Big) \ , \ \o_{\p}(1)\Big)$, 
		with $d=2m+1$,
		\item  $\Big(\p^{m}\times Q^{m+1} \ ,\  p_{_1}^*\o(1)\otimes p_{_2}^*\o(1)\Big)$, 
		with $d=2m+1$, or
		\item  $\Big(\p_{\p^{m+1}}\big(T_{\p^{m+1}}\big) \ , \ \o_{\p}(1)\Big)$, with $d=2m+1$.
		\end{enumerate}
\end{enumerate}
\end{lemma}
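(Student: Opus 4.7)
The plan is to combine Mori's Cone Theorem with Wi\'sniewski's classification of Fano manifolds of large index, and then verify the polarization case-by-case.

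For part (1), the ampleness of both $L$ and $-2K_Y-dL$ gives $-K_Y\cdot C > (d/2)(L\cdot C)$ for every curve $C\subset Y$. Combined with Mori's bound $l(R)\leq d+1$ on lengths of extremal rays, this forces $L\cdot R < 2+2/d$, so $L\cdot R\in\{1,2\}$ whenever $d\geq 2$. If $L\cdot R=2$, then $l(R)=d+1$, and the Ionescu--Wi\'sniewski inequality $l(R)\leq \dim E(R)+1$ forces $\dim E(R)=d$; this means $\rho(Y)=1$ and $-K_Y=(d+1)H$ for the ample generator $H$ of $\pic(Y)$, whence Kobayashi--Ochiai yields $(Y,L)\cong(\p^d,\o(2))$. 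The dimension-one case is handled directly: $Y=\p^1$ and $L=\o(a)$ with $\o(4-a)$ ample, so $a\in\{1,2,3\}$, giving the additional exception $(\p^1,\o(3))$.

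For part (2), assume $\rho(Y)>1$. The exceptions $(\p^d,\o(2))$ and $(\p^1,\o(3))$ both have Picard rank one, so part (1) gives $L\cdot R=1$ for every extremal ray $R$. Applied to arbitrary rational curves, the ampleness of $-2K_Y-dL$ together with integrality of intersection numbers yields $i_Y\geq m+1$, where $d=2m$ or $d=2m+1$; in particular $i_Y>d/2$. This places $Y$ in the scope of Wi\'sniewski's classification of smooth Fano manifolds with $\rho\geq 2$ and pseudoindex exceeding half the dimension, whose list consists of exactly the five underlying varieties of (a)--(e). For each such $Y$ the extremal rays of $\eff(Y)$ and their intersection numbers with generators of $\pic(Y)$ are known explicitly; the conditions that $L$ be ample and $L\cdot R=1$ on every extremal ray $R$ pin down $L$ uniquely as the polarization described in the statement, and the ampleness of $-2K_Y-dL$ then reduces to a direct numerical check.

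The main obstacle is invoking Wi\'sniewski's classification. A more self-contained alternative begins from inequality \eqref{long_R}: combined with $i_Y,\,l(R)\geq m+1$, it yields $\dim E(R)\geq 2m$ for every extremal ray, so every contraction is of fiber type when $d=2m$, and of fiber type or divisorial with $\dim E(R)=d-1$ when $d=2m+1$. General fibers of fiber-type contractions then have Picard rank one with $-K_F\cdot R\geq m+1$ and are identified as $\p^m$, $\p^{m+1}$, or $Q^{m+1}$ via Kobayashi--Ochiai; the variety $Y$ is then reconstructed from the pairwise compatibility of its (at least two) extremal contractions together with the classification of smooth scrolls for the divisorial cases.
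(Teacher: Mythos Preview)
Your proposal is correct and close in spirit to the paper's proof; both hinge on bounding $L\cdot R$ via lengths of extremal rays and then invoking Wi\'sniewski-type classifications. The execution of part (2) differs in one structural respect. Rather than appealing to a pseudoindex classification and then determining $L$ case by case, the paper first proves that $L$ is the fundamental divisor. For $d=2m$ it sets $A:=-K_Y-mL$, shows via \eqref{long_R} that $A\cdot R=1=L\cdot R$ on every extremal ray, and concludes $-K_Y\sim(m+1)L$; this simultaneously upgrades the pseudoindex bound to the index statement $r_Y=m+1$ (which is what \cite[Theorem~B]{wisniewski_90} actually requires) and determines $L$ for free. For $d=2m+1$ the paper runs the same trick in the subcase where every extremal ray has $l(R)=m+1$ and $E(R)=Y$ (then citing \cite{wisniewski_91}), while the boundary subcases $l(R)=m+2$ or $\dim E(R)=2m$---where the index need not equal $m+1$---are handled instead by the Andreatta--Occhetta classification \cite{AO_long_R} of equality in \eqref{long_R}. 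Your closing ``self-contained alternative'' is essentially this argument. The trade-off: your main route is shorter but attributes to Wi\'sniewski a pseudoindex classification that his original papers state only for the index; the paper's route is more explicit about what is being cited and recovers the polarization $L$ as part of the argument rather than by a separate verification.
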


\begin{rem}
In (c) above, $Y$ can also be described as the blowup of $\p^{2m+1}$ along a linear subspace of dimension $m-1$.
In (e) above, $Y$ can also be described as a smooth divisor of type $(1,1)$ on $\p^{m+1}\times\p^{m+1}$.
\end{rem}

\begin{proof}[{Proof of Lemma~\ref{adjunction}}]
Suppose that $\rho(Y)=1$. 
Then $\nec{Y}$ consists of a single extremal ray  $R$, and there is an ample divisor 
$L'$ on $Y$ such that $\pic(X)=\z\cdot [L']$. 
Let $\lambda$ be the positive integer such that $L\sim \lambda L'$.
If $i_Y=d+1$, then $(Y,L')\cong \big(\p^d, \o_{\p^d}(1)\big)$, and 
$-2K_Y-dL\sim \big(d(2-\lambda) +2\big)L'$.
Since this is ample, either $\lambda\leq 2$ or $(d,\lambda)=(1,3)$.
If $i_Y\leq d$, then
$
1\leq (-2K_Y-dL)\cdot R = 2i_Y-d\lambda (L'\cdot R)\leq d\big(2-\lambda (L'\cdot R)\big).
$
Hence, $\lambda=L'\cdot R=1$.

From now on we assume that $\rho(Y)>1$.
Then $d>1$ and $i_Y\geq \frac{d+1}{2}$.
Moreover, by \cite{wisniewski_90},  $r_Y\leq i_Y\leq \frac{d}{2}+1$. 
Let $R$ be any extremal ray of $\nec{Y}$. We claim that $L\cdot R=1$.
Indeed, if $L\cdot R\geq 2$, then $l(R)=d+1$, contradicting \eqref{long_R}.

Suppose that $d=2m$ is even. Then $i_Y= m+1$.
Set $A=-K_Y-mL$. By assumption $A$ is ample.
For any extremal ray $R\subset \nec{Y}$, \eqref{long_R} implies that $l(R)=m+1$, 
and thus $A\cdot R= 1=L\cdot R$. 
Hence, $A\equiv L$, and so $A\sim L$ since $Y$ is Fano.
In particular $-K_Y\sim (m+1)L$, and thus $r_Y=m+1$.
By  \cite[Theorem B]{wisniewski_90}, this implies that $Y\cong \p^{m}\times \p^{m}$.

Now suppose that $d=2m+1$ is odd. Then $i_Y= m+1$.
Set $A'=-2K_Y-(2m+1)L$. By assumption $A'$ is ample.
Let $R$ be an extremal ray of $\nec{Y}$. 
Then $l(R)\geq m+1$, and $\dim E(R)\leq 2m+1$. 
By \eqref{long_R}, there are three possibilities:
\begin{enumerate}
	\item[(a)] $l(R)=  m+2$, $E(R)=Y$, and equality holds in \eqref{long_R};
	\item[(b)] $l(R)= m+1$, $\dim E(R) =2m$, and equality holds in \eqref{long_R}; or
	\item[(c)] $l(R)= m+1$, $E(R)=Y$, and equality in \eqref{long_R} fails by $1$.
\end{enumerate}

In \cite{AO_long_R}, Andreatta and Occhetta classify the cases in which equality holds in \eqref{long_R}, assuming
$\dim Y-1\leq\dim E(R)\leq \dim Y$. They show that in this case either $Y$ is a product of projective spaces, or a blowup
of $\p^{2m+1}$ along a linear subspace of dimension at most $m-1$. 
From this we see that in case (a) we must have $Y\cong \p^{m+1}\times\p^{m}$, 
while in case (b) $Y$ must be isomorphic to  
the blowup of $\p^{2m+1}$ along a linear subspace of dimension $m-1$.

From now on we assume that every extremal ray $R$ of $\nec{Y}$ falls into case (c) above,
which implies that $A'\cdot R= 1=L\cdot R$. Thus $A'\sim L$, 
$-K_Y\sim (m+1)L$, and thus $r_Y=m+1$.
By \cite{wisniewski_91}, this implies that either $Y\cong \p^{m}\times Q^{m+1}$, or
$Y\cong \p_{\p^{m+1}}\big(T_{\p^{m+1}}\big)$.
\end{proof}

\medskip

\begin{proof}[{Proof of Theorem~\ref{thm1}}]
Let $X$ be a Fano manifold with $\ch_2(X)\geq 0$.
Let $(H_x, L_x)$ be a polarized minimal family of rational curves through a general point $x\in X$.
 Set $d=\dim H_x$.
By Proposition~\ref{chern_characters},
$$
c_1(H_x)={\pi_x}_*\ev_x^*\big(\ch_2(X)\big)+\frac{d}{2}c_1(L_x).
$$ 
By Lemma~\ref{Tk_preserves_positivity}, 
${\pi_x}_*\ev_x^*$ preserves the properties of being weakly positive and nef.
Thus $-K_{H_x}$ is ample and $-2K_{H_x}-dL_x$ is nef.
Since $H_x$ is Fano, ${\pi_x}_*\ev_x^*\big(\ch_2(X)\big)$ is ample if and only if it is weakly positive.
Hence, $\ch_2(X)> 0$ implies that $-2K_{H_x}-dL_x$ is ample.
If ${\ev_x}_*\pi_x^*\big(\eff_1(H_x)\big)=\eff_2(X)$, and $-2K_{H_x}-dL_x$ is ample (respectively nef), then
clearly $\ch_2(X)$ is positive (respectively nef).
This proves the first part of the theorem.

The second part follows from Lemma~\ref{adjunction}(2). 
Examples of Fano manifolds $X$ with  $\ch_2(X)>0$ realizing
each of the exceptional pairs are given in section~\ref{examples}.

Finally, suppose that $\ch_2(X)>0$, $\ch_3(X)\geq 0$ and $d\geq 2$.
We already know from part (1) that  $-2K_{H_x}-dL_x$ is ample.
We want to prove that $\ch_2(H_x)>0$ and $\rho(H_x)=1$. 
For that purpose we may assume that $(H_x, L_x)\not\cong \big(\p^d,\o(2)\big)$.
Let $R\subset \eff_1(H_x)$ be an extremal ray.
By Lemma~\ref{adjunction}(1), there is a rational curve $\ell\subset H_x$ such that $R=\r_{\geq 0}[\ell]$
and  $L_x\cdot \ell=1$. Moreover,
$$
{\pi_x}_*\ev_x^*\big(2 \ \ch_2(X)\big)\cdot [\ell]=2\ \ch_2(X)\cdot {\ev_x}_*\pi_x^*[\ell]
= \big(c_1(T_X)^2-2c_2(T_X)\big)\cdot {\ev_x}_*\pi_x^*[\ell]
$$
is a positive integer, and thus $\geq 1$.
Therefore $\eta:={\pi_x}_*\ev_x^*\big(\ch_2(X)\big)-\frac{1}{2}c_1(L_x)\in N^1(H_x)_{\q}$ is nef.
We rewrite formula \eqref{ch_2 of H_x} of  Proposition~\ref{chern_characters} as
$$
\ch_2(H_x)={\pi_x}_*\ev_x^*\big(\ch_3(X)\big)+\frac{1}{2}\eta\cdot c_1(L_x)+\frac{d-1}{12}c_1(L_x)^2.
$$
Since ${\pi_x}_*\ev_x^*$ preserves the properties of being nef, 
we conclude that $\ch_2(H_x)$ is positive.
By \ref{non-examples},  none of the exceptional pairs $(H_x, L_x)$ from part (2)
satisfy $\ch_2(H_x)>0$.
Hence, $\rho(H_x)=1$.
\end{proof}

\begin{lemma} \label{Hx_covered_by_lines}
Let $X$ be a  Fano manifold. 
Let $(H_x, L_x)$ be a polarized minimal family of rational curves through a general point $x\in X$.
Set $d=\dim H_x$.
\begin{enumerate}
\item Suppose that $\ch_2(X)>0$, $d\geq 1$ and $(H_x, L_x)\not\cong$
$\big(\p^d,\o(2)\big)$,  $\big(\p^1,\o(3)\big)$.
Then any minimal dominating family of rational curves on $H_x$ 
parametrizes smooth rational curves of $L_x$-degree equal to $1$.
\item  Suppose that $\ch_2(X)>0$, $\ch_3(X)\geq0$, $d\geq 2$ and $(H_x, L_x)\not\cong$
$\big(\p^d,\o(2)\big)$.
Let $(W_h,M_h)$ be a polarized minimal family of rational curves through a general point $h\in H_x$.
Suppose that 
$(W_h,M_h)\not\cong$ $\big(\p^k,\o(2)\big)$,  $\big(\p^1,\o(3)\big)$.
Then there is an isomorphism $g:\p^2\to S\subset H_x$ mapping a point $p\in \p^2$ to $h\in H_x$,
sending lines through $p$ to curves parametrized by $W_h$, and such that 
$g^*L_x\cong \o_{\p^2}(1)$.
%
\end{enumerate} 
\end{lemma}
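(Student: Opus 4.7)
The proof splits along the two parts; both parts combine the ampleness output of Theorem~\ref{thm1} with the extremal-ray analysis of Lemma~\ref{adjunction} and the $\p^{k+1}$-construction of Lemma~\ref{f:P^k->X}.

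\emph{Approach to (1).} Theorem~\ref{thm1}(1) gives that $-2K_{H_x}-dL_x$ is ample, and Lemma~\ref{adjunction}(1) (this is where the exclusion of $(\p^d,\o(2))$ and $(\p^1,\o(3))$ enters) gives $L_x\cdot R=1$ for every extremal ray $R$ of $\nec{H_x}$. Let $W$ be any minimal dominating family of rational curves on $H_x$ and let $C$ be a general member; by the basic theory recalled in \S\ref{Hx}, $C$ is smooth and bend-and-break yields $-K_{H_x}\cdot C\le d+1$. The ampleness then forces $L_x\cdot C<2+2/d$, so $L_x\cdot C\in\{1,2\}$ as soon as $d\ge 2$. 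The decisive step is to exclude $L_x\cdot C=2$: this would force $-K_{H_x}\cdot C=d+1$, and the characterisation of projective space cited as \cite{CMSB} in \S\ref{Hx} would force $H_x\cong\p^d$ with $L_x\cong\o(2)$, contradicting the hypothesis. The residual case $d=1$ is a direct check on $H_x\cong\p^1$: ampleness of $-2K_{H_x}-L_x$ bounds $\deg L_x\le 3$, and the excluded values leave $L_x\cong\o(1)$.

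\emph{Approach to (2).} Theorem~\ref{thm1}(3) gives that $H_x$ is itself a Fano manifold with $\rho(H_x)=1$ and $\ch_2(H_x)>0$. The hypothesis $(W_h,M_h)\not\cong(\p^k,\o(2))$ (for any $k$) rules out $\dim W_h=0$ (which would be the pair $(\p^0,\o(2))$), so $\dim W_h\ge 1$. I would then apply part (1) with $H_x$ in the role of $X$ and $(W_h,M_h)$ in the role of $(H_x,L_x)$: every minimal dominating family of rational curves on $W_h$ parametrises smooth rational curves of $M_h$-degree $1$. Choose $Z$ to be a general member of such a family, so that $(Z,M_h|_Z)\cong(\p^1,\o(1))$ and $Z$ avoids the finite set $W_h^{\text{Sing},h}$ (see \S\ref{Hx} applied to $H_x$). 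Now apply Lemma~\ref{f:P^k->X} with $H_x$ in place of $X$, $(W_h,M_h)$ in place of $(H_x,L_x)$, and this $Z$: it produces a finite, generically injective morphism $g\colon \p^2\to H_x$ sending $p\mapsto h$ and each line through $p$ onto a curve $C_z\subset H_x$ parametrised by the corresponding $z\in Z\subset W_h$. Each $C_z$ lies in the minimal dominating family of $H_x$ whose restriction at $h$ is $W_h$, so part (1) applied to the original $X$ gives $L_x\cdot C_z=1$; therefore $g^*L_x\cdot\ell=1$ on every line $\ell$ through $p$, forcing $g^*L_x\cong\o_{\p^2}(1)$.

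The main obstacle is the final upgrade from generic injectivity to the asserted isomorphism $g\colon \p^2\stackrel{\sim}{\to} S:=g(\p^2)$. On each line $\ell$ through $p$, $g|_\ell\colon \ell\to C_z$ is a degree-one map of smooth rational curves (smoothness of $C_z$ and $g^*L_x|_\ell=\o(1)$), hence an isomorphism; the remaining task is to show that for distinct $z_1,z_2\in Z$ the curves $C_{z_1}$ and $C_{z_2}$ meet only at $h$. I would attack this by exploiting that $\tau_h(Z)\subset\p(T_hH_x^{^{\vee}})$ is a projective line, so the $C_{z_i}$ have distinct tangent directions at $h$, and combining this with the unsplitness of $W_h$: any secondary intersection would produce, via deformation of $Z$, a splitting of a curve of minimal $L_x$-degree into two pieces through two general points of $S$, contradicting unsplitness and the bound $L_x\cdot C_z=1$.
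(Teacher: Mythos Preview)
Your overall architecture matches the paper's, and your degree argument in part~(1) is actually a bit cleaner: the paper splits into $\rho(H_x)=1$ versus $\rho(H_x)>1$, checking the latter against the list in Theorem~\ref{thm1}(2), whereas your uniform use of the bend-and-break bound $-K_{H_x}\cdot C\le d+1$ together with the ampleness of $-2K_{H_x}-dL_x$ works regardless of $\rho$. However, you are missing one key device that the paper invokes precisely at the two places where your argument has gaps: the finite morphism $\tau_x\colon H_x\to\p(T_xX^{\vee})$ with $L_x=\tau_x^*\o(1)$.

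In part~(1), your claim that a general curve $C$ parametrized by $W$ is smooth ``by the basic theory recalled in \S\ref{Hx}'' is not justified; \S\ref{Hx} only says curves are immersed at the base point. The paper deduces smoothness \emph{after} establishing $L_x\cdot\ell=1$: then $\tau_x$ maps $\ell$ finitely and with degree one onto a line, so $\ell\cong\p^1$. In part~(2), the same idea dissolves your ``main obstacle'' and renders the bend-and-break sketch unnecessary (and that sketch is not convincing as written: it never specifies which curve is supposed to split, nor why a second intersection point of $C_{z_1}$ and $C_{z_2}$ would force such a degeneration; it also does not address why the image $S$ should be smooth). Since $g^*L_x\cong\o_{\p^2}(1)$, the composite $\tau_x\circ g\colon\p^2\to\p(T_xX^{\vee})$ is a finite morphism pulling $\o(1)$ back to $\o_{\p^2}(1)$, hence a linear embedding; as $g$ and $\tau_x$ are both finite and the composite is an isomorphism onto its image, $g$ is forced to be an isomorphism onto $S$. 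A smaller point: your ``$(\p^0,\o(2))$'' trick for $\dim W_h\ge1$ is a stretch; the paper instead treats $d=2,3$ directly (using that the only Fano threefolds with $\ch_2>0$ are $\p^3$ and $Q^3$) and applies the index bound $i_{H_x}>d/2\ge2$ only for $d\ge4$.
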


\begin{proof}
Suppose that $\ch_2(X)>0$, $d\geq 1$ and $(H_x, L_x)\not\cong$
$\big(\p^d,\o(2)\big)$,  $\big(\p^1,\o(3)\big)$.
Let $W$ be a minimal dominating family of rational curves on $H_x$.
We will show that the curves parametrized by $W$ have $L_x$-degree equal to $1$.
If $\rho(H_x)>1$, then this can be checked directly from the list in Theorem~\ref{thm1}(2).
So we assume $\rho(H_x)=1$.
Let $\ell\subset H_x$ be a curve parametrized by $W$. 
Then, as in \ref{Hx},
$-K_{H_x}\cdot \ell\leq d+1$, and $-K_{H_x}\cdot \ell= d+1$ if and only if $H_x\cong \p^d$.
By Theorem~\ref{thm1}(1), $-K_{H_x}\cdot \ell>\frac{d}{2} L_x\cdot \ell$.
If $ L_x\cdot \ell >1$, then $(H_x, L_x)\cong$
$\big(\p^d,\o(2)\big)$ or  $\big(\p^1,\o(3)\big)$, 
contradicting our assumptions. We conclude that $L_x\cdot \ell =1$.
The generically injective morphism 
$\tau_x:  H_x  \to \p(T_xX^{^{\vee}})$ defined in \ref{describing_Lx}
maps curves parametrized by $W$ to lines.
So all curves parametrized by $W$ are smooth.

Now suppose we are under the assumptions of the second part of the lemma. 
By Theorem~\ref{thm1}(3), $H_x$ is a Fano manifold with $\ch_2(H_x)>0$ and $\rho(H_x)=1$.
If $d=2$, then $H_x\cong \p^2$, and by our assumptions $L_x\cong \o(1)$.
Now suppose $d=3$.
Recall that the only Fano threefolds satisfying $\ch_2>0$ are $\p^3$ and the smooth quadric hypersurface $Q^3\subset \p^4$ (\cite{2Fano_3folds}).
The polarized minimal family of rational curves through a general point of $Q^3$ is isomorphic to $\big(\p^1,\o(2)\big)$.
So our assumptions imply that $(H_x, L_x)\cong \big(\p^3,\o(1)\big)$, and the conclusion of the lemma is clear.
Finally assume that $d\geq 4$.
By the first part of the lemma, 
$L_x\cdot \ell =1$ 
for any curve $\ell$ parametrized by $W_h$, and $W_h^{\text{Sing},h}=\emptyset$.
Theorem~\ref{thm1}(1) implies that $i_{H_x}>\frac{d}{2}\geq 2$, 
and thus $\dim W_h\geq i_{H_x}-2\geq 1$.
By the first part of the lemma, now applied to the variety $H_x$, $W_h$ is covered by smooth 
rational curves of $M_h$-degree equal to $1$.
By Lemma~\ref{f:P^k->X}, applied to the variety $H_x$, 
there is a generically injective morphism $g:(\p^2,p)\to (H_x, h)$ mapping 
lines  through $p$ to curves on $H_x$ parametrized by $W_h$.
Since these curves have $L_x$-degree equal to $1$, they
are mapped to lines by the generically injective morphism
$\tau_x:H_x \to  \p(T_xX^{^{\vee}})$.
We conclude that the composition $\tau_x\circ g:\p^2 \to \p(T_xX^{^{\vee}})$ is an isomorphism onto its image
and $(\tau_x\circ g)^*\o_{\p}(1) \cong \o_{\p^2}(1)$.
This proves the second part of the lemma.
\end{proof}

\begin{proof}[{Proof of Theorem~\ref{thm3}}]
Let the notation  and assumptions be as in Theorem~\ref{thm3}. 

Part (1) was proved in \cite{dJ-S:2fanos_2}. It also follows from Theorem~\ref{thm1}(1):
the conditions $\ch_2(X)\geq 0$ and $d\geq 1$ imply that $H_x$ is a positive dimensional 
Fano manifold, and thus covered by rational curves. For any rational curve $\ell\subset H_x$,
$S=\ev_x\big(\pi_x^{-1}(\ell)\big)$ is a rational surface on $X$ through $x$.
Notice that $S$ is covered by rational curves parametrized by $H_x$.

For part (2), assume $\ch_2(X)> 0$ and $(H_x, L_x)\not\cong$
$\big(\p^d,\o(2)\big)$,  $\big(\p^1,\o(3)\big)$.
If $d=1$, then $(H_x, L_x)\cong \big(\p^1,\o(1)\big)$. In this case the morphism
$\tau_x:  H_x  \to \p(T_xX^{^{\vee}})$ 
is an isomorphism onto its image, and thus $H_x^{\text{Sing},x}=\emptyset$ by 
\cite[Corollary 2.8]{artigo_tese}.
If $d>1$, let $W$ be a minimal dominating family of rational curves on $H_x$.
By Lemma~\ref{Hx_covered_by_lines}(1),
the curves parametrized by $W$ are smooth and have $L_x$-degree equal to $1$.
Moreover, since $H_x^{\text{Sing},x}$ is at most finite,
a general curve parametrized by $W$ 
is contained in $H_x\setminus  H_x^{\text{Sing},x}$ by \cite[II.3.7]{kollar}. 
Part (2) now follows from Lemma~\ref{f:P^k->X}.

From now on assume that $\ch_2(X)> 0$, $\ch_3(X)\geq 0$ and $d\geq 2$.
Then $H_x$ is a Fano manifold with $\rho(H_x)=1$ and $\ch_2(H_x)>0$
by Theorem~\ref{thm1}(3).
If $d=2$, then $H_x\cong\p^2$ and $U_x=\p(E_x)$ is a rational $3$-fold.
Hence, $\ev_x(U_x)$  is a rational $3$-fold through $x$ except possibly if $\ev_x$ 
fails to be birational onto its image.
This can only occur if $\cC_x$ is singular by \cite[Corollary 2.8]{artigo_tese},
in which case $L_x\cong \o(2)$.
Now suppose $d\geq 3$.
We claim that there is a rational surface $S\subset H_x\setminus H_x^{\text{Sing},x}$.
From this it follows that $\ev_x\big|_{\pi_x^{-1}(S)}$ is generically injective and $\ev_x\big(\pi_x^{-1}(S)\big)$ is a rational 
$3$-fold through $x$.
If $d=3$, then $H_x\cong \p^3$ or $Q^3\subset \p^4$,
and we can find a rational surface $S\subset H_x\setminus H_x^{\text{Sing},x}$.
Now assume $d\geq 4$, and let $W_h$ be a minimal family of rational curves through a general point $h\in H_x$.
Then $W_h$ is a Fano manifold and $\dim W_h\geq i_{H_x}-2\geq 1$.
As in the proof of part (1), now applied to $H_x$,  
each rational curve on $W_h$ yields a rational surface $S$ on $H_x$ through $h$.
Recall that $H_x^{\text{Sing},x}$ is at most finite. 
If $S \cap  H_x^{\text{Sing},x} \neq \emptyset$, then there is a rational curve on $H_x$ parametrized by $W_h$ 
meeting $H_x^{\text{Sing},x}$. 
If this holds for a general point $h\in H_x$, then there is a point $h_0\in H_x^{\text{Sing},x}$
that can be connected 
to a general point of $H_x$ by a curve parametrized 
by a suitable minimal dominating family of rational curves on $H_x$.
But this implies that a curve from this minimal dominating family has 
$-K_{H_x}$-degree equal to $d+1$, and so we must have $H_x\cong \p^d$ (see \ref{Hx}).
Since $d>2$, we can find a rational surface $S'\subset H_x\setminus H_x^{\text{Sing},x}$.
This proves part (3).

Finally, suppose we are under the assumptions of part (4).
By Lemma~\ref{Hx_covered_by_lines}(2), $H_x$ is covered by surfaces 
$S$ such that $(S, L_x|_S)\cong (\p^2,\o_{\p^2}(1))$. 
Exactly as in the proof of part (3) above, 
we can take such a surface $S\subset H_x\setminus H_x^{\text{Sing},x}$.
Part (4) now follows from Lemma~\ref{f:P^k->X}.
\end{proof}


\section{Examples}\label{examples}

In this section we discuss examples of Fano manifolds $X$ with $\ch_2(X)\geq 0$.
Theorem~\ref{thm1} provides a new way of checking positivity of $\ch_2(X)$,
enabling us to find new examples. 
Examples \ref{C.I.} and \ref{G} below appear in \cite{dJ-S:2fanos_1}. 
Example \ref{H_in_G} does not appear explicitly in \cite{dJ-S:2fanos_1}, but it can be inferred
from \cite[Theorem 1.1(3)]{dJ-S:2fanos_1}.
Examples \ref{OG}, \ref{SG}, \ref{degenerate SG} and \ref{G2/P} are new.

\begin{say}[Complete Intersections]\label{C.I.}
Let $X$  be a complete intersection of type $(d_1,\ldots, d_c)$ in $\p^n$. 
Standard Chern class computations show that $\ch_k(X)>0$ (respectively $\geq0$)
if and only if  $\sum d_i^k\leq n$ (respectively $\leq n+1$). See for instance 
\cite[2.1 and 2.4]{dJ-S:2fanos_1}.

Let $x\in X$ be a general point, and let $H_x$ be the variety of 
lines  through $x$ on $X$.
Then $H_x$ is a complete intersection of type 
$(1,2,\ldots d_1,\ldots,1,2,\ldots d_c)$  in $\p^{n-1}$, and $L_x\cong \o(1)$. 
The condition from Theorem~\ref{thm1}(1)  of $-2K_{H_x}-dL_x$ being ample 
(respectively nef) is clearly equivalent to 
$\sum d_i^2\leq n$ (respectively $\leq n+1$). 
\end{say}

\begin{say}[Grassmannians]  \label{G}
Let $X=G(k,n)$ be the Grassmannian of $k$-dimensional linear subspaces of an $n$-dimensional vector 
space $V$, with $2\leq k\leq\frac{n}{2}$. 
As computed in \cite[2.2]{dJ-S:2fanos_1}, the second Chern class of $X$ is given by 
$$
\ch_2(X)=\frac{n+2-2k}{2}\sigma_2-\frac{n-2-2k}{2}\sigma_{1,1},
$$
where $\sigma_2$ and $\sigma_{1,1}$ are the usual Schubert cycles of codimension $2$. 
Recall that $\eff_{2}(X)$ is generated by the dual Schubert cycles $\sigma_{1,1}^{*}$ and $\sigma_2^{*}$.
Thus $\ch_2(X)>0$ (respectively $\geq0$) if and only if $2k\leq n \leq2k+1$ (respectively $2k\leq n \leq2k+2$).

Given $x\in X$, let $H_x$ be the variety of  lines  through $x$ on $X$ under the Pl\"ucker embedding. 
As explained in \cite[1.4.4]{hwang}, 
$(H_x,L_x)\cong \big(\p^{k-1}\times\p^{n-k-1}, p_{_1}^*\o(1)\otimes p_{_2}^*\o(1)\big)$.
Indeed, if $x$ parametrizes a linear subspace $[W]$, then 
a line through $x$ corresponds to subspaces $U$ and $U'$ of $V$, of dimension $k-1$ and $k+1$, 
such that $U\subset W\subset U'$. So there is a natural identification $H_x\cong\p(W)\times\p(V/W)^*$.

The condition of $-2K_{H_x}-dL_x$ being ample (respectively nef) is clearly equivalent to $2k\leq n\leq 2k+1$ (respectively $2k\leq n\leq 2k+2$). 
Notice also that the map $T_1: \eff_1(H_x) \to\eff_{2}(X)$ sends
lines on fibers of $p_1$ and $p_2$  to the dual Schubert cycles $\sigma_2^{*}$ and $\sigma_{1,1}^{*}$.
In particular it is surjective.

Exceptional pairs (a), (b) in Theorem~\ref{thm1}(2) occur in this case.
\end{say}

\begin{say}[Hyperplane sections of Grassmannians] \label{H_in_G}
Let $X$ be a general hyperplane section of the Grassmannian $G(k,n)$ under the Pl\"ucker embedding,
where $2\leq k\leq\frac{n}{2}$. 
Let $x\in X$ be a general point, and $H_x$ the variety of lines through $x$ on $X$. 
Then $H_x$ is a smooth divisor of type $(1,1)$ 
in $\p^{k-1}\times\p^{n-k-1}$ and $L_x$ is the restriction to $H_x$ of $p_{_1}^*\o(1)\otimes p_{_2}^*\o(1)$. 
Thus  $-2K_{H_x}-dL_x$ is ample (respectively nef) if and only if $n=2k$
(respectively $2k\leq n\leq 2k+1$). 
In these cases $T_1: \eff_1(H_x) \to\eff_{2}(X)$ is surjective,
and thus Theorem~\ref{thm1}(1) applies.  We conclude that 
$\ch_2(X)>0$ (respectively $\geq0$) if and only if $n=2k$ (respectively $2k\leq n\leq 2k+1$). 

This example occurs as the exceptional case (e) in Theorem~\ref{thm1}(2).
\end{say}

\begin{say}[Orthogonal Grassmannians]\label{OG}
We fix $Q$ a nondegenerate symmetric bilinear form on the $n$-dimensional vector 
space $V$, and $k$ an integer satisfying $2\leq k<\frac{n}{2}-1$.
Let $X=OG(k,n)$ be the subvariety of the 
Grassmannian $G(k,n)$ parametrizing linear subspaces that are isotropic with respect to $Q$.
Then $X$ is a Fano manifold of dimension $\frac{k(2n-3k-1)}{2}$ and $\rho(X)=1$.
Notice that $X$ is the zero locus in $G(k,n)$ of a global section of the vector bundle
$\sym^2(\cS^*)$, where $\cS^*$ is the universal quotient bundle on  $G(k,n)$.
Using this description and the formula for $\ch_2\big(G(k,n)\big)$ described in \ref{G}, 
standard Chern class computations show that
$$
\ch_2(X)=\frac{n-1-3k}{2}\sigma_2-\frac{n-3-3k}{2}\sigma_{1,1},
$$
where we denote by the same symbols  $\sigma_2$ and $\sigma_{1,1}$ the restriction to $X$ 
of the corresponding Schubert cycles on  $G(k,n)$.

Given $x\in X$, let $H_x$ be the variety of  lines  through $x$ on $X$ under the Pl\"ucker embedding.
We claim that $(H_x,L_x)\cong \big(\p^{k-1}\times Q^{n-2k-2}, p_{_1}^*\o(1)\otimes p_{_2}^*\o(1)\big)$.
Indeed, if $x$ parametrizes a linear subspace $[W]$, then 
a line through $x$ on $X$ corresponds to a pair $(U,U')\in\p(W)\times\p(V/W)^*$ such  that
$U'\subset U^{\perp}$ and $Q(v,v)=0$ for any $v\in U'$. 
This is equivalent to the condition that $U'\subset W^{\perp}$ and $Q(v,v)=0$ for any $v\in U'$. 
The form $Q$ induces a nondegenerate quadratic form on $W^{\perp}/W$, which 
defines a smooth quadric $Q^{n-2k-2}$ in $\p(W^{\perp}/W)^*\cong \p^{n-2k-1}$. 
The condition then becomes $U'\subset W^{\perp}$ and $[U'/W]\in Q^{n-2k-2}$, proving the claim.
Thus  $-2K_{H_x}-dL_x$ is ample (respectively nef) if and only if $n=3k+2$ 
(respectively $3k+1\leq n\leq 3k+3$). 
In these cases there are lines on fibers of  $p_1$ and $p_2$ contained in $H_x\subset \p^{k-1}\times\p^{n-k-1}$, and thus
the composite map $\eff_1(H_x) \to\eff_{2}(X)\into \eff_{2}\big(OG(k,n)\big)$ is surjective.
Thus $T_1: \eff_1(H_x) \to\eff_{2}(X)$ is surjective, and Theorem~\ref{thm1}(1) applies. 
We conclude that 
$\ch_2(X)>0$ (respectively $\geq0$) if and only if $n=3k+2$ 
(respectively $3k+1\leq n\leq 3k+3$). 

The exceptional pair (d) in Theorem~\ref{thm1}(2) occurs in this case.
\end{say}

\begin{say}[Symplectic Grassmannians] \label{SG}
We fix $\omega$ a non-degenerate antisymmetric bilinear form on the $n$-dimensional vector 
space $V$, $n$ even, and $k$ an integer satisfying $2\leq k\leq\frac{n}{2}$.
Let $X=SG(k,n)$ be the subvariety of the 
Grassmannian $G(k,n)$ parametrizing linear subspaces that are isotropic with respect to $\omega$. 
Then $X$ is a Fano manifold of dimension $\frac{k(2n-3k+1)}{2}$ and $\rho(X)=1$.
Notice that $X$ is the zero locus in $G(k,n)$ of a global section of the vector bundle
$\wedge^2(\cS^*)$, where $\cS^*$ is the universal quotient bundle on  $G(k,n)$.
Using this description and the formula for $\ch_2\big(G(k,n)\big)$ described in \ref{G}, 
standard Chern class computations show that
$$
\ch_2(X)=\frac{n+3-3k}{2}\sigma_2-\frac{n+1-3k}{2}\sigma_{1,1},
$$
where we denote by the same symbols  $\sigma_2$ and $\sigma_{1,1}$ the restriction to $X$ 
of the corresponding Schubert cycles on  $G(k,n)$.

Given $x\in X$, let $H_x\subset \p^{k-1}\times\p^{n-k-1}$ be the variety of  lines  through $x$ on $X$ under the Pl\"ucker embedding.
By  \cite[1.4.7]{hwang}, 
$(H_x,L_x)\cong \big( \p_{\p^{k-1}}(\o(2)\oplus\o(1)^{n-2k}), \o_{\p}(1)  \big)$.
When $n=2k$ this becomes $(H_x,L_x)\cong \big(\p^{k-1},\o(2)\big)$.
When $n>2k$, $H_x$ can also be described as 
the blow-up of $\p^{n-k-1}$ along a linear subspace $\p^{n-2k-1}$, and $L_x$ as $2H-E$, where 
$H$ is the hyperplane class in $\p^{n-k-1}$ and $E$ is the exceptional divisor.
Thus  $-2K_{H_x}-dL_x$ is ample (respectively nef) if and only if $n=2k$ or $n=3k-2$ 
(respectively $n=2k$ or $3k-3\leq n\leq 3k-1$). 
In these cases $T_1: \eff_1(H_x) \to\eff_{2}(X)$ is surjective.
Indeed, if $n=2k$, then $b_4(X)=1$. 
If $3k-3\leq n\leq 3k-1$, then there are lines on fibers of  $p_1$ and $p_2$ contained in $H_x\subset \p^{k-1}\times\p^{n-k-1}$, and thus
the composite map $\eff_1(H_x) \to\eff_{2}(X)\into \eff_{2}\big(OG(k,n)\big)$ is surjective.
So Theorem~\ref{thm1}(1) applies, and we conclude that 
$\ch_2(X)>0$ (respectively $\geq0$) if and only if $n=2k$ or $n=3k-2$ 
(respectively $n=2k$ or $3k-3\leq n\leq 3k-1$).

When $m$ is even, the exceptional pair (c) in Theorem~\ref{thm1}(2) occurs for 
$X=SG(m+2,3m+4)$. 
The exceptional pair $(H_x,L_x)\cong (\p^d, \o(2))$ in Theorem~\ref{thm3}(2)
occurs for  $X=SG(d+1,2d+2)$. 
\end{say}

\begin{say}[A two-orbit variety]\label{degenerate SG}
We fix $\omega$ an antisymmetric bilinear form of maximum rank $n-1$ on the $n$-dimensional vector 
space $V$, $n$ odd, and $k$ an integer satisfying $2\leq k<\frac{n}{2}$.
Let $X$ be the subvariety of the 
Grassmannian $G(k,n)$ parametrizing linear subspaces 
that are isotropic with respect to $\omega$. 
Then $X$ is a Fano manifold of  dimension $\frac{k(2n-3k+1)}{2}$ and $\rho(X)=1$.
Note that $X$ is not  homogeneous. 

The same argument presented in \ref{SG} above, taking $x\in X$ a general point, 
shows that $\ch_2(X)>0$ (respectively $\ch_2(X)\geq0$)
if and only if $n=3k-2$ (respectively  $3k-3\leq n\leq 3k-1$).
When $m$ is odd, the exceptional pair (c) in Theorem~\ref{thm1}(2) occurs for such $X$, with 
$k=m+2$ and $n=3m+4$. 
\end{say}

\begin{say}[The $5$-dimensional homogeneous space $G_2/P$]  \label{G2/P}
Let $X$ be the $5$-dimensional homogeneous space $G_2/P$.
Then $X$ is a Fano manifold with $\rho(X)=1$, and 
$(H_x,L_x)\cong\big(\p^1,\o(3)\big)$, as explained in \cite[1.4.6]{hwang}.
Since $b_4(X)=1$, the map $T_1: \eff_1(H_x) \to\eff_{2}(X)$ is surjective,
and thus Theorem~\ref{thm1}(1) applies.  We conclude that 
$\ch_2(X)>0$.
The exceptional pair $(H_x,L_x)\cong (\p^1, \o(3))$ in Theorem~\ref{thm3}(2)
occurs in this case.
\end{say}

\begin{say}[Non-Examples]\label{non-examples}
By \cite[Theorem 1.2]{dJ-S:2fanos_1}, the following smooth projective varieties do not satisfy $\ch_2(X)>0$.
\begin{itemize}
\item Products $X\times Y$, with $\dim X, \dim Y>0$.
\item Projective space bundles $\p(E)$, with $\dim X>0$ and $\rank E \geq2$.
\item Blowups of $\p^n$ along smooth centers of codimension $2$.
\end{itemize}

By Theorem \ref{thm2}(1),
if $X$ a Fano manifold and $H_x$ is not Fano, then $\ch_2(X)$ is not nef. 
This is the case, for instance, when $X$ is the moduli space of rank $2$ vector bundles with fixed determinant 
of odd degree on a smooth curve $C$ of  genus $\geq2$. 
In this case $H_x$ is the family of Hecke curves through $x=[E]\in X$, which are 
conics with respect to the ample generator of $\pic(X)$. 
As explained in \cite[1.4.8]{hwang}, $H_x\cong\p_C(E)$, which is not Fano.
\end{say}

\bibliographystyle{amsalpha}
\bibliography{carolina}

\end{document}